\def\th@plain{%
  \thm@notefont{}
  \itshape 
}
\def\th@definition{%
  \thm@notefont{}
  \normalfont 
}
\newtheorem{proposition}{Proposition}[section]
\newtheorem{lemma}[proposition]{Lemma}
\newtheorem{theorem}[proposition]{Theorem}
\newtheorem{corollary}[proposition]{Corollary}
\theoremstyle{definition}
\newtheorem{remark}[proposition]{Remark}
\newtheorem{definition}[proposition]{Definition}
\numberwithin{equation}{section} \setcounter{tocdepth}{1}
\newcommand{\R}{\mathbb{R}}
\newcommand{\C}{\mathbb{C}}
\newcommand{\Z}{\mathbb{Z}}
\newcommand{\Q}{\mathbb{Q}}
\newcommand{\pr}{\mathbb{P}}
\newcommand{\scX}{\mathcal{X}}
\newcommand{\X}{\mathcal{X}}
\newcommand{\mft}{\mathfrak{t}}
\DeclareMathOperator{\Ric}{Ric}
\DeclareMathOperator{\DF}{DF}
\title[The K\"ahler-Ricci flow and optimal degenerations]{The K\"ahler-Ricci flow and optimal degenerations}
\author[Ruadha\'i Dervan]{Ruadha\'i Dervan}
\address{Ruadha\'i Dervan, Department of Pure Mathematics and Mathematical Statistics, University of Cambridge.}
\email{R.Dervan@dpmms.cam.ac.uk}
\author[G\'abor Sz\'ekelyhidi]{G\'abor Sz\'ekelyhidi}
\address{G\'abor Sz\'ekelyhidi, Department of Mathematics, University of Notre Dame.}
\email{gszekely@nd.edu}
\begin{document}

\begin{abstract} 

We prove that on Fano manifolds, the K\"ahler-Ricci flow produces a ``most destabilising'' degeneration, with respect to a new stability notion related to the $H$-functional. This answers questions of Chen-Sun-Wang and He.

We give two applications of this result. Firstly, we give a purely algebro-geometric formula for the supremum of Perelman's $\mu$-functional on Fano manifolds, resolving a conjecture of Tian-Zhang-Zhang-Zhu as a special case. Secondly, we use this to prove that if a Fano manifold admits a K\"ahler-Ricci soliton, then the K\"ahler-Ricci flow converges to it modulo the action of automorphisms, with any initial metric. This extends work of Tian-Zhu and Tian-Zhang-Zhang-Zhu, where either the manifold was assumed to admit a K\"ahler-Einstein metric, or the initial metric of the flow was assumed to be invariant under a maximal compact group of automorphism. 

\end{abstract}

\maketitle

\section{introduction}

A basic question in K\"ahler geometry is which Fano manifolds admit K\"ahler-Einstein metrics. The Yau-Tian-Donaldson conjecture~\cite{Yau93, don-toric, GT}, resolved by Chen-Donaldson-Sun~\cite{CDS}, relates this to K-stability of the manifold. 

\begin{theorem}\cite{CDS} A Fano manifold admits a K\"ahler-Einstein metric if and only if it is K-stable. \end{theorem}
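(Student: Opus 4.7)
The forward implication, that a K\"ahler-Einstein Fano is K-stable, is the easier direction and goes back to Tian and Donaldson. I would argue as follows: given a test configuration $(\scX,\scL)$ for $(X,-K_X)$, associate to it a subgeodesic ray of K\"ahler potentials on $X$, and identify the Donaldson-Futaki invariant $\DF(\scX,\scL)$ (up to a positive constant) with the asymptotic slope of the Mabuchi functional $\scM$ along this ray. Existence of a K\"ahler-Einstein metric forces $\scM$ to be proper modulo $\Aut(X,-K_X)$ and hence bounded below, which renders every such asymptotic slope non-negative. The equality case, needed for K-polystability, would then follow by showing that vanishing slope forces the ray to come from a holomorphic vector field on $X$.

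For the reverse direction the plan is to follow Chen-Donaldson-Sun and run a continuity method through conical K\"ahler-Einstein metrics. Pick a smooth divisor $D \in |-\mu K_X|$ with $\mu$ sufficiently divisible and study
\begin{equation*}
\Ric(\omega_\beta) = \beta\omega_\beta + (1-\beta)[D], \qquad \beta \in I \subset (0,1].
\end{equation*}
The three tasks are (i) to produce a conical solution at some small $\beta_0 > 0$, using log-Fano existence results of Berman and Jeffres-Mazzeo-Rubinstein; (ii) to prove openness of the solvable set by inverting the linearised operator on H\"older spaces adapted to conical singularities along $D$; and (iii) to prove closedness, so that the solvable set is all of $(0,1]$, yielding a smooth K\"ahler-Einstein metric on $X$ at $\beta = 1$.

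The hard part will be closedness. For a sequence $\omega_{\beta_i}$ with $\beta_i \to \beta_\infty$, Cheeger-Colding-Tian theory produces a Gromov-Hausdorff limit $W$ whose regular part carries a smooth K\"ahler-Einstein metric, and the plan is to upgrade this to algebro-geometric convergence. The crucial analytic input is the Donaldson-Sun partial $C^0$ estimate, which would give uniform lower bounds on the Bergman kernel of $-mK_X$ for large $m$ and hence uniform projective embeddings $\Phi_m \colon X \to \pr^N$. Passing to the Hilbert-scheme limit should identify $W$ with a normal projective variety with klt singularities and produce an associated test configuration $(\scX,\scL)$ with central fibre $W$.

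To close the argument, compute the asymptotic Mabuchi slope in terms of the singular K\"ahler-Einstein metric on $W$ to deduce $\DF(\scX,\scL) \le 0$, with equality only if $W \cong X$. K-stability of $X$ would then force $W \cong X$ and $\omega_\infty$ to be a smooth K\"ahler-Einstein metric, closing the continuity method. The single hardest ingredient is the partial $C^0$ estimate: one must construct good pluri-anticanonical sections on near-regular regions of the Gromov-Hausdorff limit using H\"ormander's $L^2$ method, then propagate control across singular strata via Cheeger-Colding's almost-splitting theorem and an induction on tangent cones.
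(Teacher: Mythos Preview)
The paper does not prove this theorem: it is quoted in the introduction as the result of Chen--Donaldson--Sun \cite{CDS} and is used only as background motivation. There is therefore no ``paper's own proof'' to compare against; everything that follows in the article concerns the H-functional and the K\"ahler--Ricci flow, and nowhere revisits the proof of the Yau--Tian--Donaldson conjecture.

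That said, your sketch is a broadly faithful outline of the actual Chen--Donaldson--Sun strategy, with the continuity path through conical metrics, the partial $C^0$ estimate, and the Hilbert scheme limit all in the right places. Two small points of accuracy: for the forward direction in the Fano case, the cleanest route is not via properness of the Mabuchi functional (which is a deeper fact than needed) but rather via Berman's argument using convexity of the Ding functional along geodesics, or the earlier arguments of Tian and Donaldson based on the Futaki invariant and degeneration of metrics; and in the closedness step, the sign of the Donaldson--Futaki invariant of the limiting test configuration is obtained not from a Mabuchi slope computation but from the fact that the central fibre carries a (conical) K\"ahler--Einstein metric, forcing its Futaki invariant to vanish, which combined with a strict inequality for non-product test configurations yields the contradiction with K-stability.
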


In this paper our main focus is the situation when a Fano manifold $X$ does not admit a K\"ahler-Einstein metric. In analogy with the Harder-Narasimhan filtration of unstable vector bundles, and more generally with optimal destabilising one-parameter subgroups in geometric invariant theory~\cite{GK,BT05}, one expects that in this case $X$ has an optimal destabilising degeneration. 

One precise conjecture in this direction is due to Donaldson~\cite{SD}, predicting that the infimum of the Calabi functional on $X$ is given by the supremum of the Donaldson-Futaki invariants $\mathrm{DF}(\mathcal{X})$ over all test-configurations $\mathcal{X}$ for $X$. While this conjecture remains open in general, we show that an analogous result holds in the Fano case, if we replace the Calabi functional by the H-functional, and the Donaldson-Futaki invariant with the H-invariant, which we define by analogy with an invariant introduced by Tian-Zhang-Zhang-Zhu~\cite{TZZZ} for holomorphic vector fields.

\begin{theorem}\label{introtheorem} Let $X$ be a Fano manifold. We have 
\begin{equation}\label{eq:infH}
 \inf_{\omega\in c_1(X)} H(\omega) =\sup_{\X} H(\X),
\end{equation}
where $H(\omega)$ is the H-functional, and the supremum is taken over the H-invariants of all test-configurations $\X$ for $X$. 
\end{theorem}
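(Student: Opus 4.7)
My plan is to prove the two inequalities separately: the easier $\inf_\omega H(\omega) \geq \sup_\X H(\X)$ via a slope-type computation, and the harder reverse inequality by extracting a distinguished test-configuration from the K\"ahler-Ricci flow.

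For the first inequality, I would fix a test-configuration $\X$ for $X$ and construct an associated smooth subgeodesic ray $\omega_t \in c_1(X)$ whose asymptotic behaviour is encoded by $\X$. Modelled on the Tian-Zhang-Zhang-Zhu invariant for holomorphic vector fields, $H(\X)$ should be defined using the Duistermaat-Heckman measure of the central fibre together with its $\C^*$-action, as an entropy-type functional (possibly infimised over a shift corresponding to normalisation). A convexity argument along the ray, in the spirit of standard slope formulae for the Mabuchi functional and the Donaldson-Futaki invariant, should then yield $H(\omega_0) \geq H(\X)$ for any starting metric $\omega_0$, hence $\inf_\omega H(\omega) \geq \sup_\X H(\X)$.

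For the reverse direction, I would exploit that the K\"ahler-Ricci flow $\omega_t$ is monotonically non-increasing in $H$ (Perelman, He): for any initial $\omega_0$ one has $\lim_{t\to\infty} H(\omega_t) \leq H(\omega_0)$, and combining this with the first inequality forces $\lim_{t\to\infty} H(\omega_t) = \inf_\omega H(\omega)$. By the work of Chen-Sun-Wang on the Hamilton-Tian conjecture, the flow Cheeger-Gromov converges, after pulling back by diffeomorphisms, through an algebraic two-step degeneration $X \rightsquigarrow Y \rightsquigarrow W$ whose first step is a $\Q$-Fano test-configuration $\X$ for $X$ with central fibre $Y$, and where $W$ is a $\Q$-Fano variety carrying a K\"ahler-Ricci soliton. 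The heart of the argument is to establish
\begin{equation*}
H(\X) \;=\; \lim_{t\to\infty} H(\omega_t),
\end{equation*}
which then gives $\sup_{\X'} H(\X') \geq H(\X) = \inf_\omega H(\omega)$ and closes the loop.

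The main obstacle is precisely this identification. One must relate the purely algebraic weight data on $Y$ defining $H(\X)$ to the analytic quantity $H(\omega_W)$, using that the second step $Y \rightsquigarrow W$ is itself a test-configuration along which the relevant Duistermaat-Heckman data are preserved, and that on $W$ the soliton equation pins down $H(\omega_W)$ in terms of the soliton vector field. Identifying this soliton vector field with the optimal normalisation implicit in the definition of $H(\X)$ is the technically demanding step; once accomplished, it simultaneously confirms that $\X$ is the most destabilising degeneration in the sense of the new $H$-stability notion, thereby answering the questions of Chen-Sun-Wang and He advertised in the abstract.
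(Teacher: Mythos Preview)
Your outline matches the paper's proof closely: the lower bound is obtained via a geodesic ray and a convexity/slope argument, and the reverse inequality comes from Chen--Sun--Wang's two-step degeneration along the K\"ahler--Ricci flow together with the fact that the weight data (hence the $H$-invariant) are preserved under the second step. Two sharpenings to keep in mind: the relevant convexity is that of the \emph{Ding} functional (Berndtsson), not the Mabuchi functional, combined with Jensen's inequality to compare $H(\omega)$ with the slope; and the Chen--Sun--Wang output is in general an $\R$-degeneration rather than a genuine test-configuration, so one must approximate by rational one-parameter subgroups (special degenerations with the same $\Q$-Fano central fibre) and use continuity of the $H$-invariant to conclude that the supremum over test-configurations already realises the infimum.
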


Our proof builds closely on the previous works~\cite{CSW,CW,WH}. The H-functional is defined by
\[ H(\omega) = \int_X h e^h\, \omega^n, \]
where $h$ is the Ricci potential of $\omega$ normalised so that $e^h$ has average 1. As far as we can tell this functional first appears in the literature as the difference of the Mabuchi and Ding functionals in Ding-Tian~\cite[p. 69]{DT92b}, where it is called $E$, and it is observed that $H(\omega)\geq 0$ with equality only if $\omega$ is K\"ahler-Einstein. The functional was shown to be monotonic along the K\"ahler-Ricci flow in Pali~\cite{Pali08} and Phong-Song-Sturm-Weinkove~\cite{PSSW3}. He~\cite{WH} studied the functional on the space of K\"ahler metrics in more detail, viewing it as analogous to the Calabi functional and giving lower bounds for it in terms of Tian-Zhang-Zhang-Zhu's invariant~\cite{TZZZ} for vector fields. A ``moment map'' interpretation of the H-functional has been described by Donaldson~\cite{SD2}. For the precise definitions of a test-configuration and the H-invariant, see Definitions~\ref{defn:special} and \ref{defn:Hinv} below. 

In view of Theorem~\ref{introtheorem} it is natural to say that $X$ is H-stable, if $H(\X) < 0$ for all non-trivial special degenerations (which are simply test-configurations with $\Q$-Fano central fibre). It follows that then the infimum of $H(\omega)$ is zero, or equivalently that $X$ is ``almost K\"ahler-Einstein'' in the sense of \cite{SB} and in particular $X$ is K-semistable. Conversely in Lemma~\ref{lem:jensen} we show that K-semistability implies H-stability, hence H-stability is not enough to detect the existence of a K\"ahler-Einstein metric in general. 

Given Theorem~\ref{introtheorem} it is natural to ask whether the supremum on the right hand side of \eqref{eq:infH} is achieved by a test-configuration. Our next result shows that this is the case, as long as we allow a slight generalisation of the notion of test-configurations to what we call $\R$-degenerations (see Definition~\ref{defn:Rdegen}). In fact such an optimal $\R$-degeneration is given by the filtration constructed by Chen-Sun-Wang \cite{CSW} using the K\"ahler-Ricci flow. 

\begin{corollary} On a Fano manifold that does not admit a K\"ahler-Einstein metric, the K\"ahler-Ricci flow produces an optimal $\R$-degeneration, with maximal $H$-invariant.  
\end{corollary}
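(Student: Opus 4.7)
The plan is to combine Theorem~\ref{introtheorem} with the analytic properties of the K\"ahler-Ricci flow. First, I would normalise the flow so that $\omega_t \in c_1(X)$ for all $t$. By the monotonicity of the H-functional along the flow, due to Pali~\cite{Pali08} and Phong-Song-Sturm-Weinkove~\cite{PSSW3}, the limit $H_\infty := \lim_{t\to\infty} H(\omega_t)$ exists, and since $H(\omega_t) \geq \inf_{\omega\in c_1(X)} H(\omega)$ for all $t$, passing to the limit gives $H_\infty \geq \inf_\omega H(\omega)$.

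Next, to exhibit the $\R$-degeneration I would invoke the construction of Chen-Sun-Wang~\cite{CSW}: the K\"ahler-Ricci flow produces a canonical filtration on the section ring of $X$ whose associated graded is the section ring of a $\Q$-Fano variety $X_\infty$ carrying a K\"ahler-Ricci soliton, together with the soliton holomorphic vector field on $X_\infty$. This data is exactly what Definition~\ref{defn:Rdegen} packages into an $\R$-degeneration $\X_{\mathrm{flow}}$ of $X$ with central fibre $X_\infty$, and the invariant $H(\X_{\mathrm{flow}})$ is computed from the soliton vector field via the Tian-Zhang-Zhang-Zhu-type formula appearing in \cite{TZZZ,WH}.

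The key step is the identification $H_\infty = H(\X_{\mathrm{flow}})$. I expect this to follow from the convergence analysis of \cite{CSW,WH}: the Ricci potentials $h_t$ converge, along Gromov-Hausdorff approximations, to the soliton potential on $X_\infty$, so the integral $\int_X h_t\, e^{h_t}\, \omega_t^n$ converges to the corresponding integral on $X_\infty$, which by the extension of Definition~\ref{defn:Hinv} to $\R$-degenerations is precisely $H(\X_{\mathrm{flow}})$. Matching this analytic limit with the algebraic invariant is the main technical obstacle, as it requires the $\R$-degeneration coming from the flow to be set up so that the soliton integral is literally its H-invariant.

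Once this identification is in hand, Theorem~\ref{introtheorem} closes the argument via the chain
\[
\inf_{\omega\in c_1(X)} H(\omega) \leq H_\infty = H(\X_{\mathrm{flow}}) \leq \sup_{\X} H(\X) = \inf_{\omega\in c_1(X)} H(\omega),
\]
forcing equality throughout. In particular $\X_{\mathrm{flow}}$ realises the supremum on the right-hand side of \eqref{eq:infH}, and is therefore an optimal $\R$-degeneration with maximal H-invariant, as claimed.
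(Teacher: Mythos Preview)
Your overall chain of inequalities is the right skeleton, but there is a genuine gap in your invocation of Chen--Sun--Wang. You assume that the flow produces a single $\R$-degeneration $\X_{\mathrm{flow}}$ of $X$ whose central fibre is the Gromov--Hausdorff limit $X_\infty$ carrying the soliton. This is not what \cite{CSW} gives: their output is a \emph{two-step} degeneration (Theorem~\ref{CSW-Theorem}), first an $\R$-degeneration $\X_a$ of $X$ with $\Q$-Fano central fibre $\bar X$, and then a second $\R$-degeneration $\X_b$ of $\bar X$ with central fibre $Y$, the soliton limit. The optimal degeneration is $\X_a$, and as the paper stresses in the Remark following the proof of Theorem~\ref{equality}, its central fibre $\bar X$ need not coincide with $Y$. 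So your ``key step'' identifying $H_\infty$ with the H-invariant of a degeneration of $X$ whose central fibre is the soliton cannot go through as written: the soliton integral lives on $Y$, while the H-invariant you need is that of $\X_a$, computed from the induced vector field on $\bar X$.

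The missing ingredient is Lemma~\ref{2stepH}: one shows $H(\X_a)=H(\X_b)$ by observing, following \cite{CSW}, that the weight decompositions of $H^0(\bar X,-rK_{\bar X})$ and $H^0(Y,-rK_Y)$ agree, so the algebraic definition of the H-invariant (Definition~\ref{defn:algH}) gives equal values. With this in hand the argument runs: the Gromov--Hausdorff convergence gives $\lim_{t\to\infty} H(\omega(t)) = H(\omega_Y)$; a separate computation on $Y$ (comparing the possibly non-smooth soliton metric to a smooth reference, using continuity of the Ricci potential and results of \cite{RB,BWN,RD}) shows $H(\omega_Y)=H(\X_b)$; and then Lemma~\ref{2stepH} yields $H(\X_b)=H(\X_a)$. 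Only then does your chain of inequalities close with $\X_a$ in place of your $\X_{\mathrm{flow}}$.
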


As we mentioned above, such an optimal degeneration should be thought of as analogous to the Harder-Narasimhan filtration for an unstable vector bundle. The corollary answers questions of Chen-Sun-Wang \cite[Question 3.8]{CSW} and He \cite[Question 3]{WH2}. It is natural to conjecture that optimal degenerations in this sense are unique. By the above result, this is related to a conjecture of Chen-Sun-Wang regarding uniqueness of the degenerations induced by the K\"ahler-Ricci flow  \cite[Conjecture 3.7]{CSW}.

Our approach enables us to prove new results about the K\"ahler-Ricci flow on Fano manifolds. First we give an algebro-geometric interpretation of the supremum of Perelman's $\mu$-functional~\cite{Per02} on Fano manifolds that do not admit K\"ahler-Einstein metrics. This builds again on work of He \cite{WH}, and as a special case answers Conjecture 3.4 in Tian-Zhang-Zhang-Zhu~\cite{TZZZ}. 

\begin{theorem}Let $\mu(\omega)$ denote Perelman's $\mu$-functional. We have $$\sup_{\omega\in c_1(X)}\mu(\omega) = nV - \sup_{\X} H(\X).$$ 
In addition the $\mu$-functional tends to its supremum along the K\"ahler-Ricci flow starting from any initial metric.
\end{theorem}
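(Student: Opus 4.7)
The strategy is to reduce the theorem to two key ingredients. First, Theorem~\ref{introtheorem} gives
$$\sup_{\X} H(\X) = \inf_{\omega \in c_1(X)} H(\omega),$$
so it suffices to establish the identity
$$\sup_{\omega \in c_1(X)} \mu(\omega) = nV - \inf_{\omega \in c_1(X)} H(\omega),$$
together with the convergence of $\mu(\omega_t)$ to this supremum along the K\"ahler-Ricci flow.

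For the upper bound $\sup \mu \le nV - \inf H$, I would use the variational definition of $\mu$: for any K\"ahler metric $\omega \in c_1(X)$ with Ricci potential $h$, a direct calculation plugging $f = -h$ (with the appropriate normalisation) into Perelman's $\mathcal{W}$-functional yields $\mathcal{W}(\omega, -h) = nV - H(\omega)$. Since $\mu(\omega)$ is by definition the infimum of $\mathcal{W}(\omega, \cdot)$ over admissible $f$, this gives $\mu(\omega) \le nV - H(\omega)$ pointwise, and the desired inequality follows by taking the supremum in $\omega$.

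For the reverse inequality together with the flow convergence, I would use the K\"ahler-Ricci flow as a dynamical tool. Starting from any initial metric, the flow $\omega_t$ enjoys two monotonicities: $H(\omega_t)$ is non-increasing (Pali, Phong-Song-Sturm-Weinkove) and $\mu(\omega_t)$ is non-decreasing (Perelman). By the corollary to Theorem~\ref{introtheorem}, the flow produces an optimal $\R$-degeneration and $H(\omega_t) \to \inf_\omega H(\omega)$. Writing $\mu_\infty = \lim_{t \to \infty} \mu(\omega_t) \le \sup \mu$, the crucial step is to show that $\mu(\omega_t) - (nV - H(\omega_t)) \to 0$ as $t \to \infty$, which forces $\mu_\infty = nV - \inf H$ and yields both the lower bound $\sup \mu \ge nV - \inf H$ and the convergence $\mu(\omega_t) \to \sup \mu$.

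The hardest step is this asymptotic identification. Equivalently, one needs the minimiser of $\mathcal{W}(\omega_t, \cdot)$ to converge (in a suitable weak sense) to $-h_{\omega_t}$ as $t \to \infty$. This fails in general at finite time but should hold in the limit because the flow is expected to degenerate to a K\"ahler-Ricci soliton on the central fibre of the optimal $\R$-degeneration, where the minimiser of $\mathcal{W}$ and the Ricci potential coincide up to an additive constant. I would expect this to follow by combining the structure theorem of Chen-Sun-Wang~\cite{CSW} with He's analysis of the $\mathcal{W}$- and $H$-functionals along the flow~\cite{WH}, together with a standard gradient estimate on the Ricci potential to pass the identification through the degeneration.
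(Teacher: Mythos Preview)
Your overall strategy matches the paper's: the upper bound $\mu(\omega)\le nV-H(\omega)$ (which is exactly He's inequality, obtained as you say by testing the $W$-functional with $f=-h$) combined with Theorem~\ref{equality} gives $\sup_\omega\mu(\omega)\le nV-\sup_\X H(\X)$, and the reverse inequality plus the flow convergence are obtained by analysing the K\"ahler-Ricci flow limit. You have also correctly isolated the crucial step: one must show that along the flow $\mu(\omega_t)$ and $nV-H(\omega_t)$ have the same limit.

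The gap is in your sketch of that crucial step. The limit space $(Y,\omega_Y)$ produced by Chen--Sun--Wang is in general singular, and the issue is not the Ricci potential (which is controlled by Perelman's $C^1$ estimates and behaves well) but the $W$-\emph{minimiser} $f_t$. One must show that $W(\omega_t,f_t)$ does not lose mass into the singular set of $Y$. The paper handles this with three specific inputs you do not invoke: (i) the result of Bamler and Chen--Wang that $f_t\to h_Y$ smoothly on compact subsets of the regular part $Y_{\mathrm{reg}}$; (ii) a uniform $L^\infty$ bound on $\Phi_t=e^{-f_t/2}$, coming from the uniform Sobolev constant and scalar curvature bounds along the flow; and (iii) Bamler's codimension-four estimate for the singular set, so that its volume is arbitrarily small. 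Even with these, one does not prove directly that $\mu(\omega_t)-(nV-H(\omega_t))\to 0$; rather, one only obtains the one-sided bound $\liminf_{t\to\infty}\mu(\omega_t)\ge W(\omega_Y,h_Y)$, exploiting that the gradient term $4|\nabla\Phi_t|^2$ in $W$ has a sign and may be discarded on the complement of a compact set. The squeeze then closes because $W(\omega_Y,h_Y)=nV-H(\X_a)$ already saturates the upper bound.

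Your proposed tool, ``a standard gradient estimate on the Ricci potential'', controls the wrong object: it is the minimiser $f_t$, not $h_{\omega_t}$, whose behaviour near the singular set must be analysed, and there is no a priori gradient bound on $f_t$ preventing concentration there. The argument genuinely needs the regularity theory of \cite{Bam,CW}.
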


\noindent As shown by Berman~\cite{RB}, the Donaldson-Futaki invariant of special degenerations can also yield upper bounds for the $\mu$-functional, however it is not known whether the supremum can be characterised in that way. 

Finally, using a result of Tian-Zhang-Zhang-Zhu \cite{TZZZ}, we obtain a general convergence result for the K\"ahler-Ricci flow, assuming the existence of a K\"ahler-Ricci soliton. 

\begin{corollary} Suppose $X$ is a Fano manifold admitting a K\"ahler-Ricci soliton $\omega_{KRS}$, and let $\omega\in c_1(X)$ be an arbitrary K\"ahler metric. The K\"ahler-Ricci flow starting from $\omega$ converges to $\omega_{KRS}$, up to the action of the automorphism group of $X$.
\end{corollary}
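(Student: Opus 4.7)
The plan is to combine the convergence of Perelman's $\mu$-functional along the K\"ahler-Ricci flow (from the preceding theorem) with smooth compactness of the flow on Fano manifolds and a local stability result near a K\"ahler-Ricci soliton, so that the convergence theorem of Tian-Zhang-Zhang-Zhu can be applied even to initial data without $K$-invariance. Since $\omega_{KRS}$ is a critical point of $\mu$ realising its supremum on $c_1(X)$, the preceding theorem yields
\[
 \mu(\omega(t)) \longrightarrow \mu(\omega_{KRS}) \quad \text{as } t \to \infty,
\]
where $\omega(t)$ is the K\"ahler-Ricci flow from any initial $\omega \in c_1(X)$. Using Perelman's non-collapsing together with his scalar curvature and diameter bounds along the flow, and the Sesum-Tian / Tian-Zhang smooth compactness theory, every sequence $t_k \to \infty$ admits a subsequence along which $\omega(t_k)$ converges smoothly modulo diffeomorphisms to a K\"ahler metric $\omega_\infty$ on some Fano limit $X_\infty$. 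Continuity of $\mu$ forces $\mu(\omega_\infty) = \sup \mu$, so $\omega_\infty$ is a K\"ahler-Ricci soliton on $X_\infty$.

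The next step is to identify $X_\infty$ with $X$. By the corollary following Theorem~\ref{introtheorem}, the flow produces an optimal $\mathbb{R}$-degeneration of $X$, and the Gromov-Hausdorff limit $X_\infty$ of $\omega(t_k)$ must be identified with its central fibre. If $X$ admits a K\"ahler-Ricci soliton then $H(X)=0$ is the maximum of the $H$-invariant, so this optimal degeneration is the trivial one, forcing $X_\infty \cong X$. Uniqueness of K\"ahler-Ricci solitons modulo automorphisms (Tian-Zhu, Berndtsson) then gives $\omega_\infty = \sigma^* \omega_{KRS}$ for some $\sigma \in \Aut(X)$, and hence $\sigma_k^* \omega(t_k) \to \omega_{KRS}$ smoothly along a subsequence. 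Once $\omega(t)$ is known to enter arbitrarily small $C^\infty$-neighbourhoods of the $\Aut(X)$-orbit of $\omega_{KRS}$, the Tian-Zhang-Zhang-Zhu local stability estimates (whose $K$-invariance assumption is used in their work only to produce such proximity globally via an energy inequality, and can be bypassed here since we already have proximity) propagate the convergence to all times, giving convergence of the full flow to $\omega_{KRS}$ modulo $\Aut(X)$.

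The main obstacle is the identification $X_\infty \cong X$: the K\"ahler-Ricci flow on a Fano manifold generally produces a non-trivial jump of complex structure in its Cheeger-Gromov limit, and ruling this out is precisely where the hypothesis that $X$ admits a K\"ahler-Ricci soliton must enter, via the characterisation of the supremum of $\mu$ in Theorem~\ref{introtheorem} and the resulting triviality of the optimal $\mathbb{R}$-degeneration. A secondary technical point is checking that the Tian-Zhang-Zhang-Zhu stability and exponential decay arguments survive once the $K$-invariance assumption is replaced by the a priori $C^\infty$-closeness to the soliton orbit obtained from the previous step.
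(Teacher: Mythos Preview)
Your approach is much more elaborate than the paper's, and contains a genuine error.

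The paper's proof is a one-line citation: Tian--Zhang--Zhang--Zhu already proved, with \emph{no} invariance assumption on the initial metric, that if $X$ admits a K\"ahler--Ricci soliton and $\mu(\omega(t)) \to \sup_{\omega\in c_1(X)} \mu(\omega)$ along the flow, then the flow converges to $\omega_{KRS}$ modulo automorphisms. Their $K$-invariance hypothesis was used only to establish this convergence of $\mu$; the conditional implication itself is unconditional. Since Theorem~\ref{thm:perelman} supplies $\mu(\omega(t))\to\sup\mu$ for arbitrary initial data, the corollary follows immediately. You are effectively trying to re-derive the TZZZ implication from scratch, which is unnecessary.

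More seriously, your identification step is wrong. You write that ``if $X$ admits a K\"ahler--Ricci soliton then $H(X)=0$ is the maximum of the $H$-invariant, so this optimal degeneration is the trivial one.'' This is false whenever the soliton vector field $W$ is nonzero: in that case $\inf_\omega H(\omega) = H(\omega_{KRS}) = H(W) > 0$, and by Theorem~\ref{equality} the supremum of $H(\X)$ over degenerations is this positive number, achieved by the \emph{product} (not trivial) degeneration induced by $W$. So your argument for $X_\infty \cong X$ collapses precisely in the genuinely soliton (non-Einstein) case. One could try to repair this by observing that the product degeneration still has central fibre $X$, but the Chen--Sun--Wang picture involves a two-step degeneration $X \rightsquigarrow \overline{X} \rightsquigarrow Y$ with $Y$ the Gromov--Hausdorff limit, and arguing $Y\cong X$ from optimality of the first step alone needs more care. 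Your invocation of ``smooth compactness'' is also too strong: a priori the Chen--Wang limit is only a $\Q$-Fano variety, possibly singular.
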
 

When $\omega_{KRS}$ is actually K\"ahler-Einstein, this was proven by Tian-Zhu~\cite{TZ1,TZ2} using Perelman's estimates~\cite{Per02, ST03} (see also \cite{CS2}). When $\omega_{KRS}$ is a general K\"ahler-Ricci soliton, then Tian-Zhang-Zhang-Zhu~\cite{TZZZ} proved the convergence result under the assumption that the initial metric $\omega$ is invariant under a maximal compact group of automorphisms of $X$. 

We emphasise that the techniques used in the present note are not really new. Instead the novelty in our work is the observation that by using a slightly different notion of stability, the older techniques yield new stronger results.

\

\noindent {\bf Acknowledgements:} The first author would like to thank Stuart Hall and Song Sun for helpful comments. We are also grateful to Thibaut Delcroix and Shijin Zhang for comments on a previous version of the paper. We also thank the referee for their comments. Some of this work was carried out while the first author visited Notre Dame; he thanks the department for their hospitality. The second author is grateful for support from NSF grant DMS-1350696. 

\

\noindent {\bf Notation and conventions:} We normalise $dd^c$ so that $dd^c = \frac{i}{2\pi}\partial\bar\partial$. A $\Q$-Fano variety is a normal variety $X$ such that $-K_X$ is an ample $\Q$-line bundle and such that $X$ has log terminal singularities. The volume of $X$ is denoted by $V=(-K_X)^n = \int_X c_1(X)^n$. When $X$ is smooth, given a metric $\omega\in c_1(X)$, we define its Ricci potential $h$ to be such that $\Ric\omega - \omega = dd^c h$ and $\int_X e^h\omega^n = V.$ When $X$ is a $\Q$-Fano variety, given a positive metric $p$ on $-K_X$ with curvature $\omega$, we set the Ricci  potential to be $h = \log\left(\frac{\omega^n}{p}\right)+C$, with the constant $C$ chosen such that $\int_X e^h\omega^n = V.$ By a smooth Kahler metric on X we mean a $(1,1)$-form on the regular part of $X$ obtained locally by restricting a smooth K\"ahler metric under a local embedding into $\C^N$, see for example \cite{JPD}.

\section{H-stability}

\subsection{Analytic aspects}  

Let $X$ be a $\Q$-Fano variety. We first focus on the analytic aspects of H-stability, postponing the algebraic description to Section \ref{sec:alg}. Here we will restrict ourselves to considering special degenerations in the sense of Tian~\cite{GT}. In Section~\ref{sec:alg} we will define $\R$-degenerations. 

\begin{definition}\label{defn:special}
\cite{GT} A \emph{special degeneration} of $X$ is a normal $\Q$-Fano family $\pi:\X\to\C$, together with a holomorphic vector field $v$ on $\X$, a real multiple of which generates a $\C^*$-action on $\scX$ covering the natural action on $\C$. In addition the fibre $\scX_t$ over $t$ is required to be isomorphic to $X$ for one, and hence all, $t \in \C^*$. 
We call $\X$ a product special degeneration if $\X \cong X\times \C$, while $\X$ is trivial if in addition the vector field is trivial on the $X$ factor. \end{definition}

We now define the H-invariant to be a certain integral over $\X_0$, which was first considered by Tian-Zhang-Zhang-Zhu~\cite{TZZZ} in the case of product special degenerations with $X$ smooth. We remark that this H-invariant is not linear in the vector field $v$, and so it is important to allow ``scalings'' of $\C^*$-actions in the above definition. 

\begin{definition} \label{defn:Hinv}
Choose a smooth K\"ahler metric $\omega_0\in c_1(\X_0)$, and let $h_0$ be the corresponding Ricci potential. Let $\theta_0$ be a Hamiltonian for the induced holomorphic vector field $v$ on $\X_0$. Then we define the \emph{H-invariant} $H(\X)$ to be  $$H(\X) = \int_{\X_0} \theta_0 e^{h_0}\omega_0^n - V \log\left(\frac{1}{V}\int_{\X_0}e^{\theta_0} \omega_0^n\right) ,$$
where the integrals can be performed on the regular part of $\X_0$. \end{definition} 

The factors of $V$ are included to ensure the integral equals zero if $\theta_0=0$. It is straight forward to check that $H(\X)$ is independent of choice of Hamiltonian, i.e. it does not change if we replace $\theta$ by $\theta+C$ for $C\in\R$.
We will later give an equivalent, algebro-geometric, definition of the H-invariant which will show that it is also independent of choice of $\omega_0\in c_1(\X_0)$ (which follows from \cite{TZZZ} with $\X_0$ smooth). 

The definition of H-stability then simply requires control of the sign of the H-invariant of each special degeneration.

\begin{definition} We say that $X$ is \emph{H-stable} if $H(\X)<0$ for all non-trivial special degenerations of $X$. \end{definition}

By contrast recall the analytic definition of the Donaldson-Futaki invariant, which is the relevant weight for K-stability.

\begin{definition}We define the \emph{Donaldson-Futaki invariant} of $\X$ to be $$\DF(\X) =  \int_{\X_0} \theta_0 e^{h_0}\omega_0^n - \int_{\X_0} \theta_0 \omega_0^n .$$ We say that $X$ is \emph{K-semistable} if $\DF(\X)\leq 0$ for all special degenerations $\X$, and $X$ is \emph{K-stable} if in addition $\DF(\X)=0$ only if $\X_0 \cong X$.\end{definition}

We remark that, in the literature, the opposite sign convention for the Donaldson-Futaki invariant is often used, but the present sign convention seems to be more natural in view of \eqref{eq:infH}. The conventions also sometimes differ in the sign given to the Hamiltonian function corresponding to a $\C^*$-action. This makes little difference in the Donaldson-Futaki invariant, since it is linear, but the H-invariant is more sensitive to changes in such conventions. 

The two invariants are related as follows:

\begin{lemma}\label{lem:jensen} We have $\DF(\X) \geq H(\X)$, with equality if and only if $\X$ is trivial. \end{lemma}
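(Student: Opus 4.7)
The plan is to reduce the inequality to a single application of Jensen's inequality. Writing $\mu := \omega_0^n / V$ as a probability measure on the regular part of $\X_0$ (a genuine probability measure, since the total volume is $V$ and the integral makes sense because $\X_0$ is $\Q$-Fano with log terminal singularities), unwinding the two definitions gives
\[
\DF(\X) - H(\X) = V\log\!\left(\int_{\X_0} e^{\theta_0}\, d\mu\right) - V \int_{\X_0} \theta_0\, d\mu.
\]
Since $\exp$ is strictly convex, Jensen's inequality in the form $\log\int e^{\theta_0}\, d\mu \geq \int \theta_0\, d\mu$ immediately yields $\DF(\X) \geq H(\X)$, with equality only when $\theta_0$ is $\mu$-almost everywhere constant; by smoothness on the regular part, $\theta_0$ is then genuinely constant there.

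A constant Hamiltonian forces $\iota_{v_0}\omega_0 = -d\theta_0 = 0$, where $v_0 := v|_{\X_0}$ is the induced holomorphic vector field. Nondegeneracy of $\omega_0$ on the regular locus then gives $v_0 \equiv 0$ there, and hence everywhere by continuity. Equivalently, the generating $\C^*$-action on $\X$ fixes the central fibre pointwise.

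To finish, I would upgrade ``the $\C^*$-action fixes $\X_0$ pointwise'' to ``$\X$ is trivial.'' For this I would invoke the structural fact that a flat family over $\C$ equipped with a $\C^*$-action covering the standard scaling on the base and fixing the central fibre pointwise must split equivariantly as $\X \cong \X_0 \times \C$ with trivial action on the first factor: at smooth points of $\X_0$ this follows from equivariant linearization (weight $0$ on $T_p\X_0$ and weight $1$ on the fibre direction), and in general $\X$ is recovered as the closure of a $\C^*$-orbit of any generic fibre. Since $\X_t \cong X$ for $t\neq 0$, this is precisely the trivial special degeneration in the sense of Definition~\ref{defn:special}. The main obstacle is exactly this last step: the Jensen computation is a one-liner, but rigorously passing from $\C^*$-triviality on the central fibre to triviality of the whole family requires some care in the singular $\Q$-Fano setting. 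The rest is routine manipulation of the definitions.
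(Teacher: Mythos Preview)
Your argument is correct and is essentially the paper's own proof: both reduce to Jensen's inequality applied to the probability measure $V^{-1}\omega_0^n$, yielding $\log\int e^{\theta_0}\,d\mu \geq \int\theta_0\,d\mu$ with equality precisely when $\theta_0$ is constant. The only difference is that the paper stops at ``$\theta_0$ is constant, i.e.\ $\X$ is trivial'' without further comment, whereas you spell out the passage from a constant Hamiltonian on $\X_0$ to triviality of the whole family; your added detail is sound but the paper evidently regards this implication as standard for special degenerations.
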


\begin{proof} This follows by Jensen's inequality. Indeed, $V^{-1}\omega_0^n$ is a probability measure so by concavity of the logarithm $$\log \int_{\X_0} e^{\theta_0}(V^{-1}\omega_0^n) \geq \int_{\X_0} \theta_0 (V^{-1}\omega_0^n),$$ with equality if and only if $\theta_0$ is constant, i.e. $\X$ is trivial.  \end{proof}

\begin{corollary} If a Fano manifold  is K-semistable, then it is H-stable. In particular, if a Fano manifold admits a K\"ahler-Einstein metric, then it is H-stable, but the converse is not necessarily true. \end{corollary}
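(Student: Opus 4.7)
The plan is to deduce the first implication directly from Lemma~\ref{lem:jensen}. Let $X$ be K-semistable and let $\X$ be any non-trivial special degeneration. K-semistability gives $\DF(\X) \leq 0$, while Lemma~\ref{lem:jensen} furnishes the \emph{strict} inequality $H(\X) < \DF(\X)$ precisely because $\X$ is non-trivial. Chaining the two yields $H(\X) < 0$, which is exactly H-stability.

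For the K\"ahler-Einstein statement I would simply invoke the Chen-Donaldson-Sun theorem recalled at the start of the introduction: existence of a K\"ahler-Einstein metric implies K-stability, hence in particular K-semistability, and so H-stability by the previous step.

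To show that the converse fails, one needs to exhibit a Fano manifold that is H-stable yet admits no K\"ahler-Einstein metric. The natural candidates are \emph{strictly} K-semistable Fano manifolds, that is, those carrying some non-trivial special degeneration with Donaldson-Futaki invariant exactly zero: by the first step such manifolds are H-stable, but they are not K-polystable, so by Yau-Tian-Donaldson they admit no K\"ahler-Einstein metric. The existence of strictly K-semistable smooth Fanos --- typically arising from smooth Fanos degenerating via non-trivial special test-configurations to singular K-polystable $\Q$-Fano varieties with vanishing Donaldson-Futaki invariant --- is known in the literature, and is the only point requiring external input.

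The first two parts are essentially a chaining of inequalities from Lemma~\ref{lem:jensen} together with Chen-Donaldson-Sun, so there is no real obstacle there. The subtle ingredient, and hence the main ``obstacle'', is the last one: one must appeal to the literature to guarantee that a strictly K-semistable smooth Fano manifold actually exists, since without such an example the final clause would be vacuous.
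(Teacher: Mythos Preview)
Your argument is correct and matches the paper's own proof almost verbatim: the paper also derives the first claim directly from Lemma~\ref{lem:jensen}, and for the second it invokes K-semistability of K\"ahler-Einstein manifolds together with the existence of strictly K-semistable (but not K-stable) Fanos. One small remark: for the implication ``K\"ahler-Einstein $\Rightarrow$ K-semistable'' you cite the full Chen--Donaldson--Sun theorem, which is considerable overkill---this is the \emph{easy} direction, known long before, and the paper accordingly cites Ding--Tian~\cite{DT92} and Tian~\cite{GT} both for this and for the required strictly K-semistable examples.
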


\begin{proof} The first part follows directly from Lemma \ref{lem:jensen}. The second follows from the fact that the existence of a K\"ahler-Einstein metric implies K-semistability, but there are K-semistable manifolds that are not K-stable, and so do not admit a K\"ahler-Einstein metric (see Ding-Tian \cite{DT92} and Tian~\cite{GT}). \end{proof}

\subsection{Algebraic aspects}\label{sec:alg}

We first define the algebro-geometric degenerations that we will consider. This will be a generalisation of the notion of a test-configuration~\cite{don-toric}, using the language of filtrations~\cite{DWN, BHJ}. Let $X$ be an arbitrary projective variety, and for any ample line bundle $L \to X$ let us define the graded coordinate ring
\[ R(X, L) = \bigoplus_{m\geq 0} H^0(X, mL). \]
For simplicity we write $R_m = H^0(X, mL)$. By a filtration of $R(X,L)$ we mean an $\R$-indexed filtration $\{F^\lambda R_m\}_{\lambda\in \R}$ for each $m$, satisfying
\begin{itemize}
\item[(i)] $F$ is decreasing: $F^\lambda R_m\subset F^{\lambda'}R_m$ whenever $\lambda \geq \lambda'$, 
\item[(ii)] $F$ is left-continuous: $F^\lambda R_m = \bigcap_{\lambda' < \lambda} F^{\lambda'}R_m$,
\item[(iii)] For each $m$,  $F^\lambda R_m=0$ for $\lambda \gg 0$ and $F^\lambda R_m= R_m$ for $\lambda \ll 0$, 
\item[(iv)] $F$ is multiplicative: 
\[ F^\lambda R_m\cdot F^{\lambda'} R_{m'} \subset F^{\lambda+\lambda'}R_{m+m'}. \]
\end{itemize}
The associated graded ring of such a filtration is defined to be 
\[ \mathrm{gr} F^\lambda R(X,L) = \bigoplus_{m\geq 0} \bigoplus_{i}  F^{\lambda_{m,i}}R_m\big{/} F^{\lambda_{m,i+1}} R_m, \]
where the $\lambda_{m,i}$ are the values of $\lambda$ where the filtration of $R_m$ is discontinuous. 

\begin{definition} \label{defn:Rdegen}
  An $\R$-degeneration for $(X, L)$ is a filtration of $R(X, rL)$ for some integer $r > 0$, whose associated graded ring is finitely generated.    
\end{definition}

Let us recall here that as shown by Witt Nystr\"om~\cite{DWN} (see also \cite[Proposition 2.15]{BHJ}), test-configurations correspond to $\Z$-filtrations of $R(X, rL)$ whose Rees algebra is finitely generated. These are filtrations satisfying $F^\lambda R_m = F^{\lceil\lambda\rceil} R_m$ and that
\[ \bigoplus_{m\geq 0}\left( \bigoplus_{\lambda\in \Z} t^{-\lambda} F^\lambda R_m\right) \]
is a finitely generated $\C[t]$-algebra. 

The notion of an $\R$-degeneration arises naturally from the work of Chen-Sun-Wang~\cite{CSW} on the K\"ahler-Ricci flow, and it corresponds to considering degenerations of a projective variety under certain real one-parameter subgroups of $GL_N(\C)$. Moreover, as Chen-Sun-Wang do in \cite[Section 3.1]{CSW}, we can approximate any $\R$-degeneration by a sequence of test-configurations. Indeed, given an $\R$-degeneration for $(X, L)$, let us denote by $\overline{R}$ the associated graded ring of the filtration, and by $X_0 = \mathrm{Proj}\, \overline{R}$ the corresponding projective variety. The filtration gives rise to a real one-parameter family of automorphisms of $\overline{R}$, by scaling the elements in 
\[  F^{\lambda_{m,i}}R_m\big{/} F^{\lambda_{m,i+1}} R_m  \]
by $t^{\lambda_m, i}$. Without loss of generality we can suppose that $\overline{R}$ is generated by the elements $\overline{R}_1$ of degree one (i.e. those coming from sections of $L$ on $X$), and we set $N +1 = \dim H^0(X,L)$. In this case we have an embedding $X_0 \subset \pr^N$, and a real one-parameter group of projective automorphisms of $X_0$ given by $e^{t\Lambda}$, where $\Lambda$ is a diagonal matrix with eigenvalues $\lambda_{1, i}$. In addition, as discussed in \cite{CSW}, we have an embedding $X\subset \mathbf \pr^N$, such that $\lim_{t\to\infty} e^{t\Lambda}\cdot X = X_0$ in the Hilbert scheme. Note that the matrices $e^{\sqrt{-1} t \Lambda}$ generate a compact torus $T$ in $U(N)$, which must preserve $X_0$, and therefore also acts on the coordinate ring $\overline{R}$.  Perturbing the eigenvalues of $\Lambda$ slightly to rational numbers, we can obtain a $\C^*$-subgroup $\rho$ of the complexified torus $T^\C$ acting on $\pr^N$, for which $\lim_{t\to 0} \rho(t)\cdot X = X_0$, i.e. we have a test-configuration for $X$ with the central fiber $X_0$, and the induced action on $X_0$ is a perturbation of $e^{t\Lambda}$. 

In terms of the coordinate ring $\overline{R}$ the action of $T^\C$ together with the $m$-grading defines the action of $\C^*\times T^\C$, and allows us to decompose
\[ \overline{R} = \bigoplus_{m \geq 0} \bigoplus_{\alpha\in \mathfrak{t}^*} \overline{R}_{m,\alpha} \]
into weight spaces. The action of $e^{t\Lambda}$ corresponds to a choice of (possibly irrational) $\xi\in \mathfrak{t}$, so that the weight of its action on $\overline{R}_{m,\alpha}$ is $\langle \alpha, \xi\rangle$, and the approximating test-configurations obtained above give rise to a sequence of rational $\xi_k\in \mathfrak{t}$ such that $\xi_k \to \xi$. We will define the H-invariant of an $\R$-degeneration in such a way that it is continuous under this approximation procedure. 

\begin{definition} Let $\eta \in \mft$. For $t\in\C$, we define the \emph{weight character} by $$C(\eta,t) = \sum_{m \geq 0, \alpha\in\mft^*} e^{-tm} \alpha(\eta)\dim \overline{R}_{m,\alpha} .$$ \end{definition}

By \cite[Theorem 4]{CS}, the weight character is a meromorphic function in a neighbourhood of $0\in\C$, with the following Laurent series expansion: $$ C(\eta,t) = \frac{b_0(n+1)!}{t^{n+2}} + \frac{b_1(n+2)!}{t^{n+1}}  + O(t^{-n}).$$ Moreover, $b_0,b_1$ are smooth functions of $\eta$. For the H-invariant, the key term will be the $b_1$ term of this expansion.

\begin{remark} When the $\eta$ is integral, the above construction reduces to a more well known equivariant Riemann-Roch construction, namely the constants $b_0, b_1$ are given by the asymptotics of the total weight
\[ \sum_{\alpha\in \mathfrak{t}^*} \alpha(\eta) \cdot \dim \overline{R}_{m,\alpha} = b_0 m^{n+1} + b_1 m^n + O(m^{n-1}). \]
\end{remark}

To define the H-invariant, we will also need the term 
\begin{equation}
\label{eq:c0}
 \frac{c_0}{n!} = \lim_{m\to\infty}\sum_{\alpha\in \mathfrak{t}^*} m^{-n} e^{- m^{-1} \alpha(\eta)} \dim \overline{R}_{m,\alpha}, 
\end{equation}
which is also a function of $\eta$. 
\begin{lemma}
  The limit $c_0$ is well defined, and continuous in $\eta$. 
\end{lemma}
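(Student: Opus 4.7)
The plan is to recognise the sum in \eqref{eq:c0} as the integral of $e^{-\beta(\eta)}$ against a scaled weight measure on $\mft^*$, and to conclude via weak convergence to the Duistermaat-Heckman measure of the torus action on $X_0$.

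First, I would use finite generation of $\overline{R}$ to get a uniform bound on the weights. Choosing a finite set of homogeneous generators with $T^\C$-weights $\alpha_1,\ldots,\alpha_N$, any $\alpha$ with $\overline{R}_{m,\alpha}\neq 0$ is a nonnegative integer combination of the $\alpha_i$ of total degree $m$, so $\|\alpha\|\leq m \max_i\|\alpha_i\|$ for any fixed norm on $\mft^*$. Consequently the scaled weights $\alpha/m$ all lie in a single compact set $K\subset\mft^*$ independent of $m$.

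Next, introduce the measures
\[ \nu_m := \frac{1}{m^n}\sum_{\alpha\in\mft^*}\dim(\overline{R}_{m,\alpha})\,\delta_{\alpha/m}, \]
each supported in $K$, with total mass $m^{-n}\dim\overline{R}_m\to V/n!$ by asymptotic Riemann-Roch. The expression in \eqref{eq:c0} is exactly $\int_K e^{-\beta(\eta)}\,d\nu_m(\beta)$. By the standard equivariant asymptotic Riemann-Roch for a torus acting on a polarised projective variety (valid also when $X_0$ is singular, see e.g.\ the discussion in \cite{BHJ}), the measures $\nu_m$ converge in the weak-$*$ topology on $K$ to the Duistermaat-Heckman measure $\nu$ of the $T^\C$-action on $(X_0,\scO_{X_0}(1))$. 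Since $\beta\mapsto e^{-\beta(\eta)}$ is continuous on the compact set $K$, weak convergence gives
\[ \frac{c_0}{n!}=\lim_{m\to\infty}\int_K e^{-\beta(\eta)}\,d\nu_m(\beta)=\int_K e^{-\beta(\eta)}\,d\nu(\beta), \]
establishing existence of the limit. Continuity (in fact real-analyticity) in $\eta$ is then automatic, because the right-hand side is the Laplace transform of the compactly supported measure $\nu$.

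The main obstacle is citing/justifying the weak convergence $\nu_m\to\nu$ in the required generality, since $X_0$ is only a (possibly singular) polarised projective variety and the torus action need not lift to a linearisation with nice vanishing theorems. If one wanted a self-contained argument, an alternative is to prove convergence of \eqref{eq:c0} directly by expanding $e^{-\beta(\eta)}$ as a power series in $\beta(\eta)$ and using that each polynomial moment of $\nu_m$ converges, together with the uniform bound $|\beta(\eta)|\leq \max_i\|\alpha_i\|\cdot\|\eta\|$ on $\mathrm{supp}\,\nu_m$ to dominate the tail of the series; this bypasses the need to invoke the full DH measure and also yields joint continuity in $\eta$ by uniform convergence on compact subsets.
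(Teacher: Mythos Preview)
Your argument is correct. Both you and the paper invoke the Duistermaat--Heckman measure for existence of the limit; the real difference lies in how continuity in $\eta$ is obtained.

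The paper first fixes $\eta$ and appeals to the one-dimensional Duistermaat--Heckman measure on $\R$ (the pushforward under $\alpha\mapsto\alpha(\eta)$) to get existence, then proves continuity by a separate, completely elementary estimate on the finite-$m$ sums: using $|\alpha|\leq Cm$ and the mean value theorem one has $|e^{-m^{-1}\alpha(\eta)}-e^{-m^{-1}\alpha(\eta')}|\leq C|\eta-\eta'|$ uniformly in $m$ and $\alpha$, so the pre-limit expressions for $c_0(\eta)$ and $c_0(\eta')$ differ by at most a constant times $|\eta-\eta'|$. This gives Lipschitz continuity without ever identifying the limit measure, and in fact the paper uses it to deduce existence of $c_0(\eta)$ for irrational $\eta$ from existence along an approximating sequence $\eta_k\to\eta$.

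You instead work with the full torus Duistermaat--Heckman measure $\nu$ on $\mft^*$ and read off both existence and continuity (indeed real-analyticity) at once from the Laplace-transform formula $c_0(\eta)=n!\int_K e^{-\beta(\eta)}\,d\nu(\beta)$. This is cleaner and gives a stronger conclusion, but, as you rightly flag, it front-loads the work into the weak convergence $\nu_m\to\nu$ in the multi-dimensional setting; the reference \cite{BHJ} treats the one-dimensional filtration case, so one would have to cite elsewhere or run your moment-expansion alternative. The paper's trade-off is the opposite: a weaker citation for existence, and a two-line direct estimate for continuity.
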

\begin{proof}
The existence of this limit can be seen as a consequence of the existence of the Duistermaat-Heckman measure 
\[ \mathrm{DH} = \lim_{m\to\infty} \frac{1}{N_m} \sum_{\alpha\in \mathfrak{t}^*} \dim \overline{R}_{m,\alpha} \, \delta_{m^{-1}\alpha(\eta)}, \]
where $N_m = \dim \overline{R}_m$ and $\delta$ denotes the Dirac measure (see for instance Boucksom-Hisamoto-Jonsson~\cite[Section 5]{BHJ}). In our situation this measure has compact support, and 
\begin{equation}\label{eq:c0int}
 \frac{c_0}{V} = \int_{\R} e^{-\lambda}\, \mathrm{DH}(\lambda). 
\end{equation}

In order to show that $c_0$ in \eqref{eq:c0} is continuous in $\eta$, we argue by approximation. We can assume, as above, that $\overline{R}$ is generated by the degree one elements $\overline{R}_1$. This implies that if $\alpha\in\mathfrak{t}^*$ is such that $\overline{R}_{m,\alpha}$ is nonzero, then $\alpha$ is a sum of $m$ weights appearing in the action on $\overline{R}_1$, and so in particular $|\alpha| < Cm$ for a uniform $C$. 

Suppose now that $\eta, \eta' \in \mathfrak{t}$, and $\overline{R}_{m,\alpha}$ is nontrivial. Then using the mean value theorem we obtain
\[ \left|e^{-m^{-1}\alpha(\eta)} - e^{-m^{-1} \alpha(\eta')}\right| \leq C m^{-1} | \alpha(\eta) - \alpha(\eta')| \leq C |\eta - \eta'|, \]
for a uniform $C$. Since $\dim \overline{R}_{m,\alpha} \leq \dim \overline{R}_m \leq d m^n$ for some $d > 0$,  it follows that for each $m$ we have
\[ \left| \sum_{\alpha\in \mathfrak{t}^*} m^{-n} \left(e^{- m^{-1} \alpha(\eta)}- e^{- m^{-1} \alpha(\eta')} \right) \dim \overline{R}_{m,\alpha} \right| \leq Cd |\eta - \eta'|. \]
It now follows that if $\eta_k \to \eta$, and for each $\eta_k$ the limit \eqref{eq:c0} defining $c_0(\eta_k)$ exists, then the limit defining $c_0(\eta)$ also exists and $c_0(\eta_k) \to c_0(\eta)$. 
\end{proof}

We now return to the case that $X$ is a $\Q$-Fano variety.

\begin{definition}\label{defn:algH}
 Let $\X$ be an $\R$-degeneration for $(X, -K_X)$. 
We define the \emph{H-invariant} of $\X$ to be $$H(\X) = -V\log\left(\frac{c_0}{V}\right) - 2((n-1)!)b_1,$$
where $V = (-K_X)^n$ is the volume as before. 
\end{definition}

This definition agrees with the analytic one for special degenerations: 

\begin{proposition}\label{prop:algH} The analytic and algebraic definitions of the H-invariant agree for special degenerations. \end{proposition}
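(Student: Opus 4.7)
The proof matches the two summands in Definition~\ref{defn:algH} against the two summands in Definition~\ref{defn:Hinv}, for any chosen smooth Kähler $\omega_0 \in c_1(\X_0)$. For a special degeneration the vector field $v$ is a real multiple of the integral generator of a $\C^*$-action, and both sides of the claimed identity are continuous in this scaling: the analytic one by inspection, and the algebraic one because $b_1(\eta)$ is smooth in $\eta$ by \cite[Theorem 4]{CS} and $c_0(\eta)$ is continuous by the preceding lemma. It therefore suffices to prove equality when $\eta \in \mft$ is integral, in which case $\X$ is an honest test-configuration and the filtration is the $\Z$-filtration arising from the weight decomposition of $\bigoplus_m H^0(\X_0, -mK_{\X_0})$ under the $\C^*$-action.

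For the logarithmic term, I invoke the Duistermaat-Heckman theorem. By \eqref{eq:c0int},
\[ \frac{c_0}{V} = \int_{\R} e^{-\lambda}\, \mathrm{DH}(\lambda). \]
The Duistermaat-Heckman measure for the $\C^*$-action on $(\X_0,\omega_0)$ is the pushforward of $V^{-1}\omega_0^n$ along $-\theta_0$ (the moment map in the paper's sign convention), so
\[ \frac{c_0}{V} = \frac{1}{V}\int_{\X_0} e^{\theta_0}\omega_0^n. \]
After applying $-V\log$ this matches the second term in Definition~\ref{defn:Hinv}.

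For the $b_1$ term, equivariant Hirzebruch-Riemann-Roch applied to $-mK_{\X_0}$ together with its $\C^*$-action gives the asymptotic
\[ \sum_\alpha \alpha(\eta)\dim \overline{R}_{m,\alpha} = b_0 m^{n+1} + b_1 m^n + O(m^{n-1}), \]
with, in the Fano setting,
\[ b_1 = -\frac{1}{2(n-1)!}\int_{\X_0}\theta_0\,\mathrm{Ric}(\omega_0)\wedge\omega_0^{n-1}. \]
Using $\mathrm{Ric}(\omega_0) - \omega_0 = dd^c h_0$ together with the classical integration by parts that underlies the equivalence of the analytic and algebraic Donaldson-Futaki invariants (see for example \cite{TZZZ}), one obtains
\[ \int_{\X_0}\theta_0\,\mathrm{Ric}(\omega_0)\wedge\omega_0^{n-1} = \int_{\X_0}\theta_0\, e^{h_0}\omega_0^n, \]
whence $-2((n-1)!)b_1 = \int_{\X_0}\theta_0 e^{h_0}\omega_0^n$, matching the first term of the analytic definition.

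The principal obstacle is justifying equivariant Riemann-Roch and the integration-by-parts step on the possibly singular $\Q$-Fano variety $\X_0$. I handle this by passing to a $\C^*$-equivariant resolution $\widetilde{\X_0}\to \X_0$: because $\X_0$ has log terminal singularities, the top two coefficients $b_0, b_1$ and the Duistermaat-Heckman measure are unchanged, while all integrals (which are in any case performed on the regular part of $\X_0$) converge and agree with the corresponding integrals on the resolution. Combining the two matches above yields $H(\X) = -V\log(c_0/V) - 2((n-1)!)b_1$, as claimed, and simultaneously establishes the independence of the analytic definition from the choice of $\omega_0$.
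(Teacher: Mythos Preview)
Your proof is correct and follows essentially the same route as the paper: both match the two summands separately, identifying $c_0$ via the Duistermaat--Heckman measure (the paper cites \cite[Proposition 4.1]{BWN}, you invoke the pushforward description directly) and $b_1$ via the standard Futaki-type identity (the paper just says this is ``essentially standard'' citing \cite{GT,RB,WH}, while you spell out the equivariant Riemann--Roch step and the integration by parts). The only noteworthy differences are that the paper reduces from real to integral $v$ using \emph{linearity} of the $b_1$-term rather than continuity, and that you add an explicit equivariant-resolution argument to justify the computations on the possibly singular $\Q$-Fano central fibre, which the paper leaves implicit in its citations.
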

\begin{proof}
We can apply the formula \eqref{eq:c0int} to the product degeneration $\X_0 \times \C$ induced by the action on $\X_0$, together with \cite[Proposition 4.1]{BWN} to see that
\[ c_0 = \int_X e^{\theta_0}\,\omega_0^n, \]
where $\omega_0$ is any smooth $S^1$-invariant metric on $\X_0$ and $\theta_0$ is the Hamiltonian of the induced vector field. 

What remains is to show that 
\[ b_1 = -\int_{\X_0} \theta_0 e^{h_0}\frac{\omega_0^n}{2(n-1)!}, \] 
but this is essentially standard when the vector field $v$ on $\X$ generates a $\C^*$-action~\cite{GT,RB,WH}, and both sides are linear under scaling $v$.
\end{proof}

\begin{remark} The algebraic definition of the Donaldson-Futaki invariant is $$\DF(\X) =n!\left(b_0 - \frac{2}{n}b_1\right).$$ Applying the finite Jensen's inequality one can prove $\DF(\X) \geq H(\X)$ for special degenerations, which is just the algebraic analogue of Lemma \ref{lem:jensen}, however to characterise the equality case it seems advantageous to use the analytic representations of both quantities. \end{remark}

\section{The H-functional}

Let $X$ be a $\Q$-Fano variety.

\begin{definition} Let $\omega\in c_1(X)$ be a K\"ahler metric, with Ricci potential $h$, and define the \emph{H-functional}  to be $$H(\omega) = \int_X he^h\omega^n.$$
Recall here the normalisation of $h$ so that $e^h$ has average one. It will be useful to extend this definition to the case where $\omega$ is smooth merely on the regular locus $X_{reg}$, with continuous potential globally and also continuous Ricci potential. In this situation, we make the same definition, where $\omega^n$ means the Bedford-Taylor wedge product.
\end{definition}

As we described in the Introduction, this functional goes back to Ding-Tian~\cite{DT92b}, is monotonic along the K\"ahler-Ricci flow~(see Pali~\cite{Pali08}, Phong-Song-Sturm-Weinkove \cite{PSSW3}) and was studied as a functional on the space of K\"ahler metrics in more detail by He~\cite{WH}. 

We now assume $X$ is smooth. By the Jensen inequality we have $H(\omega)\geq 0$, with equality if and only if $\omega$ is a K\"ahler-Einstein metric. More generally if $\omega$ is a K\"ahler-Ricci soliton, with soliton vector field $W$, it is clear that $H(\omega) = H(W)$ as then the Hamiltonian, suitably normalised, equals the Ricci potential.  Moreover, the critical points of $H(\omega)$ are precisely K\"ahler-Ricci solitons, and the gradient flow of the H-functional is simply the K\"ahler-Ricci flow (see \cite[Proposition 2.2]{WH}). The H-functional thus plays the role for the K\"ahler-Einstein problem that the Calabi functional plays for the constant scalar curvature problem. 

The goal of the present section is to prove that the infimum of the H-functional has an algebro-geometric interpretation.

\begin{theorem}\label{equality} We have $$\inf_{\omega \in c_1(X)} H(\omega) =\sup_{\X} H(\X),$$ where the supremum on the right hand side is taken over all $\R$-degenerations (or even just special degenerations). In addition the supremum is achieved by an $\R$-degeneration. \end{theorem}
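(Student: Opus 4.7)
My plan is to prove both inequalities and the attainment simultaneously, by relating both sides to the $\R$-degeneration produced by the K\"ahler-Ricci flow. Fix $\omega \in c_1(X)$ and let $\omega_t$ denote the K\"ahler-Ricci flow with initial datum $\omega$. Following Chen-Sun-Wang~\cite{CSW}, one embeds $X$ projectively using a fixed power of $-K_X$ and extracts from the flow a one-parameter subgroup $e^{t\Lambda_\infty}$, with $\Lambda_\infty \in \mft$ possibly irrational, whose limit in the Hilbert scheme is an $\R$-degeneration $\X_\infty$ in the sense of Definition~\ref{defn:Rdegen}. The central fibre $(\X_\infty)_0$ is a $\Q$-Fano variety carrying a (possibly singular) K\"ahler-Ricci soliton with soliton vector field $\Lambda_\infty$.

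The main step is to prove the identification
\begin{equation}\label{eq:prop-limH}
\lim_{t\to\infty} H(\omega_t) = H(\X_\infty),
\end{equation}
where the right hand side uses the algebraic Definition~\ref{defn:algH}. When $\X_\infty$ is a test-configuration this follows from Proposition~\ref{prop:algH} together with the observation that for a K\"ahler-Ricci soliton the Ricci potential $h_0$ coincides with the Hamiltonian $\theta_0$ up to an additive normalisation, so that the analytic H-functional of the soliton equals the analytic H-invariant of the product degeneration generated by its soliton vector field. For a general $\R$-degeneration $\X_\infty$ I would approximate $\Lambda_\infty$ by rational $\xi_k \to \Lambda_\infty$ as in Section~\ref{sec:alg}, producing test-configurations $\X_k$, and show $H(\X_k)\to H(\X_\infty)$: this reduces to the continuity of $c_0(\eta)$ established in Section~\ref{sec:alg} and the smoothness in $\eta$ of the Laurent coefficient $b_1$ from \cite[Theorem~4]{CS}. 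Since $H$ is non-increasing along the flow (\cite{Pali08, PSSW3}), \eqref{eq:prop-limH} gives $H(\omega)\geq H(\X_\infty)$ for every $\omega$, and since the infimum of $H$ along the flow is realised in the limit,
\[ \inf_{\omega \in c_1(X)} H(\omega) = H(\X_\infty). \]
In particular, the infimum is attained by the $\R$-degeneration $\X_\infty$.

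To conclude I would show that $\X_\infty$ is maximally destabilising, i.e. $H(\X_\infty) \geq H(\X)$ for every $\R$-degeneration $\X$ of $X$, which combined with the previous display yields
\[ \inf_\omega H(\omega) = H(\X_\infty) = \sup_{\X} H(\X), \]
and exhibits $\X_\infty$ as a maximiser. This maximality comes from the variational characterisation of the flow limit in \cite{CSW, WH}: the soliton vector field $\Lambda_\infty$ is the unique maximiser of a functional on the space of real one-parameter subgroups acting on Bergman embeddings, which I would identify with our algebraic $H(\X)$ through the Duistermaat-Heckman formula~\eqref{eq:c0int} and the Laurent expansion of the weight character $C(\eta,t)$. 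The main obstacle I anticipate lies in the passage to the limit for non-special $\R$-degenerations, both in \eqref{eq:prop-limH} and in matching the Chen-Sun-Wang functional to our $H(\X)$; in both cases the needed continuity in $\eta$ reduces to that of $b_1(\eta)$ and $c_0(\eta)$, which are already established in Section~\ref{sec:alg}.
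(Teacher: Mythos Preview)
Your overall strategy—combine He's lower bound with the Chen--Sun--Wang flow limit—is the right one, and is essentially what the paper does. But there are two genuine gaps in the proposal as written.

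\textbf{The structure of the Chen--Sun--Wang degeneration.} You assert that the flow produces a single $\R$-degeneration $\X_\infty$ of $X$ whose central fibre already carries the K\"ahler--Ricci soliton. This is not what \cite{CSW} proves. What they construct is a \emph{two-step} degeneration: an $\R$-degeneration $\X_a$ of $X$ with $\Q$-Fano central fibre $\bar X$, and then a second $\R$-degeneration $\X_b$ of $\bar X$ whose central fibre is the Gromov--Hausdorff limit $Y$ carrying the soliton. In general $\bar X \not\cong Y$ (this is the Harder--Narasimhan versus Jordan--H\"older distinction). Consequently your identification \eqref{eq:prop-limH} cannot be argued as you suggest: the soliton equation $h_0=\theta_0$ lives on $Y$, the central fibre of $\X_b$, not on the central fibre of any degeneration of $X$. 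The paper bridges this with a separate lemma showing $H(\X_a)=H(\X_b)$, using that the weight decompositions of $H^0(\bar X,-rK_{\bar X})$ and $H^0(Y,-rK_Y)$ agree; only then does $\lim_t H(\omega_t)=H(\omega_Y)=H(\X_b)=H(\X_a)$ give the value of $H$ on a degeneration of $X$ itself.

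\textbf{The inequality $\sup_\X H(\X)\le \inf_\omega H(\omega)$.} You try to obtain $H(\X_\infty)\ge H(\X)$ for all $\X$ from a ``variational characterisation of the flow limit'' in \cite{CSW,WH}. No such statement is available there: \cite{CSW} does not compare the flow degeneration to arbitrary test-configurations, and He's bound in \cite{WH} is proved only for product configurations (holomorphic vector fields). The paper instead proves directly that $H(\omega)\ge H(\X)$ for \emph{every} $\omega$ and \emph{every} $\R$-degeneration $\X$, by reducing to normal test-configurations, running the associated geodesic ray $\varphi_t$, using Berndtsson's convexity of the Ding functional to show the derivative $Y(t)$ is monotone, bounding $H(\omega)\ge Y(0)$ via Jensen, and identifying $\lim_{t\to\infty}Y(t)$ with $H(\X)$ through Hisamoto's and Berman's asymptotics. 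Once this is in hand, maximality of $\X_a$ is a consequence of the two inequalities, not an input; your proposal has the logic inverted and no mechanism to supply it.
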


This will be a consequence of the work of He~\cite{WH} to obtain the inequality $H(\omega) \geq H(\X)$ for any metric $\omega$ and $\R$-degeneration $\X$, as well as the work of Chen-Sun-Wang~\cite{CSW}, to obtain the equality. 

\begin{theorem}\label{He-Theorem} Let $\X$ be an $\R$-degeneration for $X$. Then for any metric $\omega \in c_1(X)$ we have $H(\omega) \geq H(\X)$. 
\end{theorem}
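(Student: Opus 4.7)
The proof divides cleanly into an analytic core (for special test-configurations), which follows He~\cite{WH}, and an approximation step (to reach general $\R$-degenerations).

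The starting point is the following elementary entropy inequality: for any $\omega \in c_1(X)$ with normalised Ricci potential $h$ (so $\int_X e^h\omega^n = V$) and any smooth real function $\theta$ on $X$, the nonnegativity of the Kullback--Leibler divergence between the probability measures $V^{-1}e^h\omega^n$ and $\bigl(\int_X e^\theta\omega^n\bigr)^{-1}e^\theta\omega^n$ yields
\begin{equation*}
 H(\omega) = \int_X h e^h \omega^n \geq \int_X \theta e^h \omega^n - V \log\left(\frac{1}{V}\int_X e^\theta \omega^n\right),
\end{equation*}
with equality if and only if $h - \theta$ is constant. No structure on $\theta$ is used beyond smoothness, so this is available to bound $H(\omega)$ from below by any test function we can manufacture from the degeneration.

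Next, specialise to a special test-configuration $\X$ with generating vector field $v$ and $\Q$-Fano central fibre $\X_0$. Associated to $\X$ and the reference metric $\omega$, I construct a weak geodesic ray $(\omega_s)_{s \geq 0}$ in $c_1(X)$ with $\omega_0 = \omega$, converging as $s \to \infty$ to the pullback through the $\C^*$-action of a smooth reference metric $\omega_{\X_0} \in c_1(\X_0)$; let $\theta_s$ denote the Hamiltonian of the induced $S^1$-action on $(X,\omega_s)$. Apply the entropy inequality slicewise with $\theta = \theta_s$. Two ingredients then combine to give the theorem in the test-configuration case: first, the right-hand side at $\omega_s$ converges as $s \to \infty$ to $\int_{\X_0}\theta_{\X_0} e^{h_0}\omega_{\X_0}^n - V\log\bigl(V^{-1}\int_{\X_0} e^{\theta_{\X_0}}\omega_{\X_0}^n\bigr)$, which equals $H(\X)$ by Proposition~\ref{prop:algH}; second, $H(\omega_s)$ is dominated by the initial value $H(\omega)$ along the ray (either through monotonicity of $H$ along the geodesic or by direct comparison using the entropy estimate applied to the reference slice).

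Finally, extend from test-configurations to general $\R$-degenerations by approximation. Following the discussion after Definition~\ref{defn:Rdegen}, any $\R$-degeneration is a limit of test-configurations whose generating $\xi_k \in \mft$ are rational perturbations of the (possibly irrational) $\xi$, all sharing the central fibre $X_0 = \mathrm{Proj}\,\overline R$. Since $c_0$ and $b_1$ are continuous functions of $\xi$ by the preceding lemma and \cite[Theorem 4]{CS}, the invariant $H(\X)$ is continuous in $\xi$; the test-configuration inequality then passes to the limit. The hard part of this scheme is the central one: simultaneously controlling convergence of the entropy right-hand side at $\omega_s$ to an integral on the (possibly singular) central fibre, and establishing the required domination of $H$ along the ray. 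This delicate regularity analysis for Ricci potentials along degenerating Kähler metrics is the technical heart of He's work that we invoke.
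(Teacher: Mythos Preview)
Your scheme has the right overall shape---entropy inequality, geodesic ray, approximation---but the way you link the slicewise bound back to the initial metric has a genuine gap. Applying the entropy inequality at slice $s$ gives $H(\omega_s) \geq Y(s)$, where $Y(s)$ is the right-hand side of your display with $\theta=\theta_s$ (and $\theta_s$ should be identified with $-\dot\varphi_s$; there is no literal $S^1$-action on $X$ unless the test-configuration is a product). To conclude $H(\omega)\geq \lim_s Y(s)$ you then need $H(\omega_s)\leq H(\omega)$, but monotonicity of the H-functional along geodesic rays is \emph{not} established---$H$ is known to be monotone along the K\"ahler--Ricci flow, not along weak geodesics. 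Your alternative ``direct comparison using the entropy estimate applied to the reference slice'' is too vague to fill this: applying the entropy inequality once at $\omega_0=\omega$ with $\theta=-\dot\varphi_0$ gives only $H(\omega)\geq Y(0)$, and one still has to pass from $Y(0)$ to $\lim_s Y(s)$.

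The paper closes exactly this gap by recognising that $-Y(t)$ is, up to an additive constant, the derivative of the Ding functional along the geodesic ray. Berndtsson's convexity theorem~\cite{Ber13} then forces $Y(t)$ to be monotonically decreasing, so $H(\omega)\geq Y(0)\geq \lim_t Y(t)$, with no need ever to control $H(\omega_s)$ for $s>0$. The limit is then tied to the algebraic $H$-invariant via Hisamoto~\cite{TH} (for $c_0$) and Berman's asymptotic slope of the Ding functional~\cite{RB} (for $b_1$), and these results apply to arbitrary \emph{normal} test-configurations. This last point also exposes a secondary gap in your reduction: the approximating test-configurations for a general $\R$-degeneration share its central fibre $X_0$, which need not be $\Q$-Fano, so restricting the analytic argument to special degenerations (where Proposition~\ref{prop:algH} is available) is not enough. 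The paper instead first passes to the normalisation---observing this can only increase the $H$-invariant---and then runs the geodesic/Ding argument in that generality.
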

\begin{proof}
First of all, by an approximation argument it is enough to show the result for test-configurations $\X$. 

Next we can assume that $\X$ is normal. The reason is that under normalisation the number $c_0$ does not change (see \cite[Theorem 3.14]{BHJ}), while the number $b_1$ can only decrease (see \cite[Proposition 3.15]{BHJ} or \cite[Proposition 5.1]{RT}). It follows that under normalisation the H-invariant can only increase. 

Let $\varphi_t$ be a geodesic ray in the space of K\"ahler potentials, induced by the normal test-configuration $\X$, with initial metric $\omega$ \cite{PS}. Let us denote the associated (weak) K\"ahler metrics by $\omega_t = \omega + dd^c \varphi_t$ and set 
\[ Y(t) = -\int_X \dot\varphi_t e^{h_{\varphi_t}}\omega_t^n -V\log \left(V^{-1}\int_X e^{-\dot\varphi_t}\omega_t^n\right). \] 

We now show $$H(\omega) \geq \lim_{t\to\infty} Y(t)$$ This inequality is due to He \cite{WH}; for the readers' convenience we give the proof. 

We can normalise the potentials $\varphi_t$ so that
  \begin{equation}\label{eq:intphidot}
 \int_X e^{-\dot\varphi_t}\omega_t^n = V, 
\end{equation}
  since according to Berndtsson~\cite[Proposition 2.2]{Ber09_1} this integral is independent of $t$. 
  Note also that, up to the addition of a constant, $-Y(t)$ is the derivative of the Ding functional along the ray of K\"ahler potentials $\varphi_t$. By Berndtsson~\cite{Ber13}, the Ding functional is convex along geodesics, and so $Y(t)$ is monotonically decreasing in $t$. 

  At the same time, by Jensen's inequality we have
  \[ V^{-1}\int_X (-h - \dot\varphi_0) e^{h}\, \omega^n \leq \log\left( V^{-1} \int_X e^{-h -\dot\varphi_0} e^{h}\omega^n\right) = 0, \]
  and so
  \[ H(\omega) = \int_X h e^{h} \omega^n \geq -\int_X \dot\varphi_0 e^{h}\,\omega^n = Y(0). \]
  By the monotonicity of $Y(t)$ it then follows that
  \begin{equation}\label{eq:Hlower}
 H(\omega) \geq \lim_{t\to\infty} Y(t). 
\end{equation}

What remains is to relate this limit of $Y(t)$ to the H-invariant. For this note first that by Hisamoto~\cite[Theorem 1.1]{TH}, the formula \eqref{eq:c0int}, and \eqref{eq:intphidot}, we have
\[ c_0 = \int_X e^{-\dot\varphi_t}\,\omega_t^n = V, \]
and so from Definition~\ref{defn:algH} we have 
\[ H(\X) = -2 b_1 (n-1)!. \]
Since $-Y(t)$ is, up to addition of a constant, the derivative of the Ding functional along the geodesic ray, the asymptotics of $Y(t)$ can be obtained from Berman~\cite[Theorem 3.11]{RB}. It follows that
\[ H(\X) \leq \lim_{t\to\infty} Y(t), \]
and so with \eqref{eq:Hlower} the proof is complete.  
\end{proof}

The other main ingredient that we use is the following, due to Chen-Wang \cite{CW} and Chen-Sun-Wang \cite{CSW} (see especially \cite[p12, p16]{CSW}). 

\begin{theorem}\label{CSW-Theorem} Let $(X,\omega(t))$ be a solution of the K\"ahler-Ricci flow. The sequential Gromov-Hausdorff limit of $(X,\omega(t))$ as $t\to\infty$ is a $\Q$-Fano variety $Y$, independent of choice of subsequence, which admits a K\"ahler-Ricci soliton with soliton vector field $W_Y$. Assume that $W_Y\ne 0$. Then there exists a ``two-step'' $\R$-degeneration from $X$ to $Y$, i.e. an $\R$-degeneration $\X_{a}$ for $X$ with $\Q$-Fano central fibre $\bar X$, and an $\R$-degeneration $\X_{b}$ for $\bar X$ with central fibre $Y$. The corresponding (real) one-parameter group of automorphisms on $\X_{b}$ is induced by the soliton vector field on $Y$.\end{theorem}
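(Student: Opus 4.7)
The plan is to combine Perelman's uniform estimates along the K\"ahler-Ricci flow with Cheeger-Colding type compactness, and then upgrade the resulting metric limit to an algebro-geometric one via a partial $C^0$-estimate. First, Perelman's bounds on the diameter, scalar curvature, and Ricci potential (extended to the K\"ahler-Ricci flow in \cite{ST03}) provide uniform volume non-collapsing together with control on the singular set. Cheeger-Colding-Tian compactness then guarantees that every subsequence $(X,\omega(t_i))$ admits a Gromov-Hausdorff limit $(Y,d_Y)$, which is a compact length space whose regular part carries a smooth K\"ahler structure and whose singular set has real codimension at least four.

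Second, I would establish a partial $C^0$-estimate uniformly along the flow, in the spirit of Tian, Donaldson-Sun and Chen-Donaldson-Sun: for $m$ sufficiently large and divisible, peak sections of $-mK_X$ at non-collapsed points produce a uniform embedding of $(X,\omega(t))$ into a fixed projective space $\pr^N$ through the orthonormal basis of $H^0(X,-mK_X)$. Passing to the limit, $Y$ inherits the structure of a normal projective variety with $-K_Y$ ample; combined with the Ricci potential bounds this forces $Y$ to be $\Q$-Fano. Since $H(\omega(t))$ and Perelman's $\mu$-functional are monotone along the flow and bounded below, a compactness argument produces a K\"ahler-Ricci soliton equation on $Y_{\reg}$ with some vector field $W_Y$, and uniqueness of solitons up to automorphism (Tian-Zhu, Berndtsson) identifies $Y$ as independent of the chosen subsequence.

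For the two-step degeneration, fix the common embedding $X\subset\pr^N$ from the previous step and let $\Lambda$ be the Hermitian matrix whose eigenvalues are the weights of $W_Y$ acting on $H^0(Y,-mK_Y)$ (generically irrational). The Hilbert scheme limit of $e^{t\Lambda}\cdot X$ as $t\to\infty$ yields a flat family whose general fibre is $X$ and whose central fibre is an intermediate variety $\bar X$; this is the first $\R$-degeneration $\X_a$. The second $\R$-degeneration $\X_b$ is the analogous Hilbert scheme limit starting from $\bar X$, whose central fibre is $Y$ and whose real one-parameter group is precisely $e^{t\Lambda}$, i.e.\ the one generated by $W_Y$. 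Two steps are needed because the automorphism type can jump at the first limit: $\bar X$ lies in the closure of the $GL_{N+1}(\C)$-orbit of $X$ but need not itself be in the orbit of $Y$, so the soliton vector field only makes sense as an automorphism after passing to $\bar X$.

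The hardest part by far is the uniform partial $C^0$-estimate. Without a uniform Ricci lower bound along the flow one cannot invoke H\"ormander's $L^2$-estimates directly, so a substitute has to be built from Perelman's Ricci potential bounds combined with Bergman kernel asymptotics at non-collapsed points; controlling the codimension of the singular set to rule out codimension-two singularities requires Cheeger-Colding-Tian slicing. A secondary difficulty is verifying that the limiting filtration is genuinely an $\R$-degeneration in the sense of Definition~\ref{defn:Rdegen}, i.e.\ that its associated graded ring is finitely generated; this is where the projectivity of $\bar X$ and $Y$ with ample anticanonical class, obtained from the partial $C^0$-estimate, feeds back into the algebraic setup.
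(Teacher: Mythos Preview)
The paper does not prove this theorem. It is stated with attribution to Chen--Wang \cite{CW} and Chen--Sun--Wang \cite{CSW} (``see especially \cite[p12, p16]{CSW}''), and is used as a black box in the proof of Theorem~\ref{equality}. So there is nothing to compare your proposal against: any attempt you make here is necessarily a sketch of the Chen--Sun--Wang argument itself, not of something in this paper.

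That said, your outline of the analytic part (Perelman's estimates, Cheeger--Colding--Tian compactness, the partial $C^0$-estimate and its difficulty in the absence of a Ricci lower bound, the soliton structure on the limit) is broadly in line with \cite{CW,CSW}. Your description of the two-step degeneration, however, is not quite right. The first $\R$-degeneration $\X_a$ is \emph{not} obtained by taking the Hilbert scheme limit of $e^{t\Lambda}\cdot X$ where $\Lambda$ encodes the weights of the soliton vector field $W_Y$; at that stage $W_Y$ does not act on $X$ at all. In \cite{CSW} the first degeneration arises from the sequence of embeddings $X\hookrightarrow\pr^N$ produced by $L^2$-orthonormal bases of $H^0(X,-mK_X)$ along the flow at times $t_i\to\infty$: these give a sequence of points in the Hilbert scheme whose limit is the central fibre $\bar X$, and one extracts from this convergence a real one-parameter subgroup inducing a filtration with finitely generated associated graded ring. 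Only after passing to $\bar X$ does the soliton vector field $W_Y$ act (on $Y$, hence by continuity on the nearby orbit through $\bar X$), and the second degeneration $\X_b$ is then genuinely $e^{t\Lambda}\cdot\bar X\to Y$. Your version conflates the two steps and misattributes the origin of $\X_a$.
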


Chen-Sun-Wang prove that the Donaldson-Futaki invariants of $\X_{a}$ and $\X_{b}$ are equal \cite[Proposition 3.5]{CSW}, or equivalently the Futaki invariants of $\bar X$ and $Y$ are equal. We will require an analogous statement for the H-invariant.

\begin{lemma}\label{2stepH} We have $H(\X_{a}) = H(\X_{b})$. \end{lemma}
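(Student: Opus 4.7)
The plan is to follow the strategy used by Chen-Sun-Wang to prove the corresponding equality $\DF(\X_a)=\DF(\X_b)$ in \cite[Proposition 3.5]{CSW}, adapting it to the H-invariant. The idea is that both sides of the claim can be expressed entirely in terms of weight data of the central fibre, and after a suitable identification of vector fields this weight data will be the same on $\bar X$ and on $Y$. Concretely, Definition \ref{defn:algH} writes $H(\X)$ as a combination of the constant $c_0$ from \eqref{eq:c0int} and the Laurent coefficient $b_1$ of the weight character $C(\xi,t)$; both depend only on the generator $\xi$ together with the multiplicities $\dim \overline{R}_{m,\alpha}$ of the weight decomposition of the coordinate ring of the central fibre.

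First I would fix a torus $T$ acting on both $\bar X$ and $Y$ together with a single element $\xi \in \mathfrak{t}$ that serves as the generator of $\X_a$ on $\bar X$ and, simultaneously, as the soliton vector field $\xi_b$ on $Y$ generating $\X_b$. This identification is what Theorem \ref{CSW-Theorem} supplies: the generator $\xi_a$ of $\X_a$ extends through the further degeneration $\X_b$ to act on $Y$, and this extension coincides with the soliton vector field, so we may view $\xi_a$ and $\xi_b$ as the same element $\xi$. With this in hand, $\X_b$ is a $T$-equivariant $\R$-degeneration whose generator $\xi$ lies in $\mathfrak{t}$, and since $T$ is abelian, $T$ commutes with $\xi$ and the $\xi$-filtration on $R(\bar X)$ is $T$-equivariant. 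Passing to the associated graded $R(Y) = \mathrm{gr}\, R(\bar X)$ therefore preserves $T$-characters, giving
\[ \dim R(\bar X)_{m,\alpha} = \dim R(Y)_{m,\alpha} \quad\text{for every } m\in\N \text{ and } \alpha\in\mathfrak{t}^*. \]

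Plugging this character identity into the defining formula \eqref{eq:c0} for $c_0$ and into the weight character $C(\xi,t)$ from which $b_1$ is extracted, both $c_0$ and $b_1$ take the same value whether computed from the central fibre of $\X_a$ or that of $\X_b$. Substituting into Definition \ref{defn:algH} then yields $H(\X_a) = H(\X_b)$.

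The main obstacle is the first step, namely the identification of $\xi_a$ with $\xi_b$ as the same element of a common torus. This is a geometric fact about CSW's two-step construction rather than something derivable from the algebraic setup alone; it reflects the fact that both vector fields are extracted from the same asymptotic limit of the K\"ahler-Ricci flow. It is already the crux of the CSW argument for the DF equality, so I would cite or reproduce the relevant part of \cite[Section 3]{CSW} for this step and then package the consequences cleanly in the language of weight characters used here.
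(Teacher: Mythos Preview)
Your proposal is correct and follows essentially the same approach as the paper: both arguments reduce to the fact, drawn from \cite[Lemma 3.4, Proposition 3.5]{CSW}, that the weight decompositions of $H^0(\bar X,-rK_{\bar X})$ and $H^0(Y,-rK_Y)$ with respect to the common $\xi$ are isomorphic, and then invoke the purely algebraic Definition~\ref{defn:algH} of the H-invariant. Your write-up simply unpacks the CSW step (identifying $\xi_a$ with $\xi_b$ in a common torus and using $T$-equivariance of the associated graded) more explicitly than the paper's one-line citation, but the route is the same.
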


\begin{proof}
By \cite[Proof of Lemma 3.4, Proposition 3.5]{CSW}, the weight decompositions of $H^0(\bar X, -rK_{\bar X})$ and $H^0(Y,-rK_Y)$ are isomorphic for all sufficiently large and divisible $r$, hence invariants created from these decompositions are equal. The result follows from the algebraic definition of the H-invariant, Definition~\ref{defn:algH}. 
\end{proof}

We now proceed to the proof of Theorem \ref{equality}.

\begin{proof}[Proof of Theorem \ref{equality}] From Theorem~\ref{He-Theorem} we already know that
\[ \inf_{\omega\in c_1(X)} H(\omega) \geq \sup_{\X} H(\X), \]
taking the supremum over all $\R$-degenerations. To complete the proof we show that the $\R$-degeneration $\X_a$ obtained from the K\"ahler-Ricci flow $\omega(t)$ in Theorem~\ref{CSW-Theorem} satisfies
\begin{equation}\label{eq:limHeq} 
 \lim_{t\to\infty} H(\omega(t)) = H(\X_a). 
\end{equation}
To see this, note that $(Y,\omega_Y)$ is the Gromov-Hausdorff limit of $(X,\omega(t))$, and so \begin{equation}\lim_{t \to \infty}H(\omega(t))=H(\omega_Y),\end{equation} where the former quantity is calculated on $X$ and the latter is calculated on $Y$. Here to make sense of $H(\omega_Y)$ we recall how to define the Ricci potential in this situation. The regularity results for K\"ahler-Ricci solitons imply that $\omega_Y$ is smooth on the regular locus $Y_{reg}$, and has continuous potential on $Y$ \cite[Section 3.3]{BWN}. Then as the Ricci potential is uniformly bounded in $C^1$ along the flow~\cite{ST03}, the Ricci potential of $\omega_Y$ on the regular locus $Y_{reg}$ extends to a continuous function on $Y$ which we still call the Ricci potential, and one can define $H(\omega_Y)$ as usual. Moreover, this implies that to prove \eqref{eq:limHeq} one one can work only on the smooth locus. 

Denote by $V$ the soliton vector field on $Y$. What we now show is that $H(V) = H(\omega_Y)$. This would be immediate if $\omega_Y$ were a smooth K\"ahler metric, however the regularity results described above do not imply this. Denote by $h$ the Ricci potential of $\omega_Y$, which is continuous by the above, and pick a smooth K\"ahler metric $\eta\in c_1(Y)$ with Ricci potential $f$ and Hamiltonian $\theta$. It follows from \cite{RB} that $$\int_{Y} h e^{h}\omega_Y^n = \int_Y \theta e^{f}\eta^n,$$ where we have used that $h$ is the Hamiltonian with respect to $\omega_Y$ of $V$. Indeed, both quantities are limit derivatives of a component of the Ding functional \cite{RB,BWN}. For the remaining term, we use that $h$ is continuous and so $$\int_Y e^h\omega_Y^n = \int_Y e^{\theta} \eta^n,$$ as follows from the proof of \cite[Theorem 3.14]{RD}. Thus $H(Y) = H(\X_b)$, and by Lemma \ref{2stepH} $H(\X_a) = H(\X_b)$, so \eqref{eq:limHeq} follows. 

Note that the $\R$-degeneration $\X_a$ has $\Q$-Fano central fiber, and so the approximating test-configurations are actually special degenerations. It follows that $\sup_{\X} H(\X)$ can be computed by considering only special degenerations $\X$. 
\end{proof}

\begin{remark}
We should emphasise that the central fiber $\overline{X}$ of the optimal degeneration does not necessarily agree with the Gromov-Hausdorff limit $Y$ along the K\"ahler-Ricci flow. The first $\mathbb{R}$-degeneration with central fiber $\overline{X}$ is analogous to the Harder-Narasimhan filtration of an unstable vector bundle, while the second $\mathbb{R}$-degeneration with central fiber $Y$ is analogous to the Jordan-H\"older filtration of a semistable bundle. 
\end{remark}

\section{Applications to the K\"ahler-Ricci flow}

We recall the following functionals introduced by Perelman~\cite{Per02}:

\begin{definition} Denote by $S(\omega)$ the scalar curvature of $\omega$. For a smoth function $f$ on $(X,\omega)$ satisfying $$\int_X e^{-f}\omega^n = V,$$ we define the \emph{W-functional} to be $$W(\omega, f) = \int_X (S(\omega) + |\nabla f|^2 + f)e^{-f}\omega^n.$$ The \emph{$\mu$-functional} is defined as $$\mu(\omega) = \inf_{f\in C^{\infty}(X)} W(\omega, f).$$\end{definition} 

The following extends \cite{TZZZ,WH}, who proved a special case of the following result, namely an upper bound over product special degenerations.

\begin{theorem}\label{thm:perelman} We have $$\sup_{\omega\in c_1(X)}\mu(\omega) = nV - \sup_{\X} H(\X).$$
In addition the supremum of $\mu$ is achieved in the limit along the K\"ahler-Ricci flow with any initial metric on $X$. 
\end{theorem}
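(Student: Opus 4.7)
The plan is to sandwich $\sup_\omega \mu(\omega)$ between $nV - \sup_\X H(\X)$ from above, via a pointwise estimate combined with Theorem~\ref{equality}, and from below by exploiting Perelman's monotonicity of $\mu$ along the K\"ahler-Ricci flow together with the soliton degeneration provided by Theorem~\ref{CSW-Theorem}.

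For the upper bound I would first establish the pointwise identity $W(\omega, -h) = nV - H(\omega)$ for any smooth $\omega \in c_1(X)$ with Ricci potential $h$. Indeed $f = -h$ is admissible in the definition of $\mu$ since $\int_X e^h \omega^n = V$, and substituting, using the K\"ahler identity $S(\omega) = n + \Delta h$, together with the observation that $\int_X(\Delta h + |\nabla h|^2) e^h \omega^n = \int_X \Delta(e^h)\,\omega^n = 0$, gives
\[
W(\omega, -h) = \int_X (n - h) e^h \omega^n = nV - H(\omega).
\]
Minimising over $f$ yields $\mu(\omega) \leq nV - H(\omega)$, and then Theorem~\ref{equality} gives
\[
\sup_\omega \mu(\omega) \,\leq\, nV - \inf_\omega H(\omega) \,=\, nV - \sup_{\X} H(\X).
\]

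For the matching lower bound I would invoke Perelman's theorem that $\mu(\omega(t))$ is monotone non-decreasing along the normalised K\"ahler-Ricci flow, so the limit $\mu_\infty = \lim_{t\to\infty} \mu(\omega(t))$ exists and satisfies $\mu_\infty \leq \sup_\omega \mu(\omega)$. On the other hand, the proof of Theorem~\ref{equality} combined with Lemma~\ref{2stepH} yields $H(\omega(t)) \to H(\omega_Y) = H(\X_b) = H(\X_a) = \sup_\X H(\X)$, where $(Y,\omega_Y)$ is the singular K\"ahler-Ricci soliton arising as the Gromov--Hausdorff limit of the flow. On the soliton the calculation above is sharp: the minimiser of $W(\omega_Y, \cdot)$ is realised by $f = -h_Y$, where $h_Y$ is the Ricci potential (equivalently, the Hamiltonian of the soliton vector field), so $\mu(\omega_Y) = nV - H(\omega_Y) = nV - \sup_\X H(\X)$. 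It therefore suffices to prove $\mu_\infty \geq \mu(\omega_Y)$.

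The main obstacle is this lower semicontinuity of $\mu$ under the Gromov--Hausdorff convergence $(X,\omega(t)) \to (Y,\omega_Y)$ to a possibly singular limit. I would adapt He's strategy \cite{WH}: Perelman's minimisers $f_t$ of $W(\omega(t), \cdot)$ enjoy uniform $C^0$ and Sobolev estimates along the flow (from Perelman's estimates as extended in \cite{ST03}) and satisfy a uniform elliptic equation on $X$. Passing to a subsequence, the $f_t$ converge on $Y_{\reg}$ to an admissible competitor $f_\infty$ for $W(\omega_Y, \cdot)$, and the integrals defining $W$ pass to the limit thanks to the continuity of the potential of $\omega_Y$ and of its Ricci potential on $Y$, which is precisely the regularity already used in the proof of Theorem~\ref{equality}. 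This gives $\mu_\infty \geq W(\omega_Y, f_\infty) \geq \mu(\omega_Y)$, and combined with the upper bound produces both the equality in the theorem and the fact that $\mu(\omega(t))$ tends to $\sup_\omega \mu(\omega)$ along the flow for any initial metric.
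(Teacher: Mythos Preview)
Your strategy matches the paper's: the upper bound via the pointwise inequality $\mu(\omega)\leq nV - H(\omega)$ (your computation of $W(\omega,-h)$ is exactly what He proves and the paper cites) combined with Theorem~\ref{equality}, and the lower bound by tracking the minimisers $f_t$ of $W(\omega(t),\cdot)$ along the flow to the soliton limit.

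Where your sketch is looser than the paper is in the passage to the limit for $W(\omega_t,f_t)$. You assert that ``the integrals defining $W$ pass to the limit thanks to the continuity of the potential of $\omega_Y$ and of its Ricci potential,'' but this does not control the gradient term $\int |\nabla f_t|^2 e^{-f_t}\omega_t^n$, which could in principle concentrate on the singular set of $Y$. The paper handles this explicitly: it rewrites $W$ in terms of $\Phi_t=e^{-f_t/2}$ so that the gradient contribution $4\int|\nabla\Phi_t|^2\omega_t^n$ is manifestly nonnegative, uses the uniform $C^0$ bound on $\Phi_t$ together with the fact that the singular set has codimension at least four (from Bamler~\cite{Bam}) to control the bounded terms on $Y\setminus K$, and then observes that positivity of the gradient term gives the one-sided estimate $\int_K(\dots)\leq W(\omega_t,f_t)+\epsilon$. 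This yields $\liminf_t W(\omega_t,f_t)\geq W(\omega_Y,h_Y)$ directly.

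A second difference is that the paper invokes the specific result of Bamler~\cite{Bam} and Chen--Wang~\cite{CW} that $f_t\to h_Y$ on compact subsets of $Y_{\reg}$, rather than a soft compactness argument producing an unspecified limit $f_\infty$. Your route then requires making sense of $\mu(\omega_Y)$ on the singular variety and verifying that $f_\infty$ is admissible (i.e.\ that the normalisation $\int_Y e^{-f_\infty}\omega_Y^n=V$ survives the limit), both of which the paper sidesteps by working directly with $W(\omega_Y,h_Y)$.
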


\begin{proof} It is shown in \cite{WH} that for \emph{all} $\omega \in c_1(X)$ we have $$\mu(\omega)\leq nV - H(\omega),$$ and hence $$\sup_{\omega} \mu(\omega) \leq nV -  \inf _{\omega} H(\omega).$$ Thus by Theorem \ref{equality} we have  
\begin{equation}\label{eq:supmu}
\sup_{\omega} \mu(\omega) \leq nV - \sup_{\X} H(\X).
\end{equation} 

Let $(Y,\omega_Y)$ be the Gromov-Hausdorff limit of $(X,\omega)$ along the K\"ahler-Ricci flow as usual. Let $h_Y$ be the Ricci potential of $\omega_Y$. Then we have have, as in \cite[p17]{CSW} (but with different normalisations),  $$W(\omega_Y,h_Y) = nV - H(Y),$$ hence by Theorem \ref{CSW-Theorem} and Lemma \ref{2stepH} there is a special degeneration $\X_a$ such that $nV - H(\X_a) = W(\omega_Y,h_Y)$. 

What remains is to show that along the K\"ahler-Ricci flow $\omega_t$ we have 
\[ \lim_{t\to\infty} \mu(\omega_t) = W(\omega_Y, h_Y). \]
Let $f_t$ denote the minimiser of the $W$ functional on $(X,\omega_t)$, i.e. $\mu(\omega_t) = W(\omega_t, f_t)$. It is well known~\cite{Rot} that $f_t$ is smooth, and in addition the uniform control of the Sobolev constant~\cite{Ye, Zha} and the scalar curvature~\cite{ST03} along the flow implies that we have a uniform bound $|\Phi_t| < C$, where $\Phi_t = e^{-f_t/2}$. In terms of $\Phi_t$ we have
\begin{equation}\label{eq:WPhi}
W(\omega_t, f_t) = \int_X (S(\omega_t) \Phi_t^2 + 4|\nabla \Phi_t|^2 - 2\Phi_t^2 \ln \Phi_t)\,\omega_t^n. 
\end{equation}

From Bamler~\cite[p. 60]{Bam} and Chen-Wang~\cite[Proposition 6.2]{CW} we know that on any compact subset $K\subset Y_{reg}$ of the regular part of $Y$, we have $f_t \to h_Y$ as $t\to\infty$, and elliptic regularity implies that this convergence holds for derivatives as well. It follows that 
\begin{equation}\label{eq:WK}
\lim_{t\to\infty} \int_K (S(\omega_t) + |\nabla f_t|^2 + f_t)e^{-f}\,\omega_t^n = \int_K (S(\omega_Y) + |\nabla h_Y|^2 + h_Y)e^{-h_Y}\,\omega_Y^n, 
\end{equation}
where we are viewing the $\omega_t$ as defining metrics on $K$ for large $t$, using the smooth convergence of the metrics on the regular part of $Y$. 

We know that $h_Y, \nabla h_Y$ are bounded, and so as we let $K$ exhaust $Y_{reg}$ on the right hand side of \eqref{eq:WK} we recover $W(\omega_Y, h_Y)$. On the left hand side we use the expression \eqref{eq:WPhi} in terms of $\Phi_t$, and the fact that the singular set has codimension at least 4~\cite{Bam}, and so in particular we can choose $K$ with the volume of $Y\setminus K$ being arbitrarily small. Although a priori the gradient $\nabla\Phi_t$ may concentrate on the singular set, it still follows that for any $\epsilon > 0$, we can choose $K$ so that for sufficiently large $t$
\[ \int_K (S(\omega_t) + |\nabla f_t|^2 + f_t)e^{-f}\,\omega_t^n \leq W(\omega_t, f_t) + \epsilon. \]
Exhausting $Y_{reg}$ with compact sets $K$ we obtain
\[ \lim_{t\to\infty} W(\omega_t, f_t) \geq W(\omega_Y, h_Y). \]
Together with \eqref{eq:supmu} this implies $\lim_{t\to\infty} \mu(\omega_t) = W(\omega_Y, h_Y)$, which is what we wanted to show. 
\end{proof}

\begin{corollary} Suppose $X$ is a Fano manifold admitting a K\"ahler-Ricci soliton $\omega_{KRS}$, and let $\omega\in c_1(X)$ be an arbitrary (not necessarily automorphism invariant) K\"ahler metric. The K\"ahler-Ricci flow starting from $\omega$ converges to $\omega_{KRS}$, up to the action of the automorphism group of $X$.
\end{corollary}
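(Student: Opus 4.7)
The plan is to identify the Gromov--Hausdorff limit produced by Theorem \ref{CSW-Theorem} with $(X,\omega_{KRS})$ itself, and then to upgrade this identification to smooth convergence modulo automorphisms using the theorem of Tian--Zhang--Zhang--Zhu.

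First I would observe that $\omega_{KRS}$ realizes $\inf_{\omega\in c_1(X)} H(\omega)$. Indeed, the product $\R$-degeneration $X\times\C$ generated by the soliton vector field $W$ has H-invariant equal to $H(W)=H(\omega_{KRS})$, since the Ricci potential of $\omega_{KRS}$ is (with the correct normalisation) a Hamiltonian for $W$. Theorem~\ref{equality} then gives
\[
H(\omega_{KRS}) \geq \inf_{\omega} H(\omega) = \sup_{\X} H(\X) \geq H(W) = H(\omega_{KRS}),
\]
so equality holds throughout. Combining this with the proof of Theorem \ref{equality}, the Gromov--Hausdorff limit $(Y,\omega_Y)$ from Theorem \ref{CSW-Theorem} and the associated $\R$-degeneration $\X_a$ satisfy $H(\omega_Y) = H(\X_a) = H(\omega_{KRS})$.

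The main step is then to deduce that $(Y,\omega_Y)\cong(X,\omega_{KRS})$ up to an automorphism of $X$. Both are $\Q$-Fano varieties carrying K\"ahler--Ricci solitons with the same value of the H-functional, and $Y$ is obtained from $X$ via the optimal $\R$-degeneration $\X_a$ of the flow. I would combine the uniqueness theorem of Tian--Zhu for K\"ahler--Ricci solitons with the equality of H-invariants to show that $\X_a$ must be conjugate, via an element of $\Aut(X)$, to the product degeneration generated by $W$; this forces $Y\cong X$ and identifies $\omega_Y$ with $\omega_{KRS}$ under that isomorphism. Once this is achieved, the Gromov--Hausdorff identification of the limit with a smooth manifold of the same dimension, together with the uniform Perelman/Sesum--Tian bounds along the flow, upgrades the convergence to smooth convergence $\sigma_t^*\omega(t)\to\omega_{KRS}$ for a family of diffeomorphisms $\sigma_t$. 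A standard gauge-fixing argument, together with uniqueness of the soliton on $X$ modulo $\Aut(X)$, allows the $\sigma_t$ to be chosen converging to an automorphism, at which point the theorem of Tian--Zhang--Zhang--Zhu applies to the gauged flow and yields the desired convergence up to $\Aut(X)$.

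The hard part is the identification $(Y,\omega_Y)\cong(X,\omega_{KRS})$. Uniqueness of K\"ahler--Ricci solitons on a fixed smooth Fano is classical, but here one must compare solitons on two a priori different $\Q$-Fano varieties linked by an $\R$-degeneration that achieves the supremum of the H-invariant. The required rigidity therefore has to be sharpened into an \emph{equivariant} uniqueness statement and extended to the possibly singular limit $Y$; once such a rigidity statement is available, everything else is essentially a packaging of the results already developed in this paper together with \cite{TZZZ}.
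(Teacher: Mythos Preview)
Your plan has a real gap, and it also misses the short route the paper actually takes.

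The gap is the step you yourself flag as ``the hard part'': deducing $(Y,\omega_Y)\cong (X,\omega_{KRS})$ from equality of H-invariants. Uniqueness of K\"ahler--Ricci solitons is only known on a fixed smooth Fano manifold; there is no available result saying that two $\Q$-Fano varieties carrying solitons with the same value of $H$ and linked by an $\R$-degeneration must be isomorphic. Nor does anything in the paper prove that an $\R$-degeneration achieving $\sup_{\X}H(\X)$ is conjugate to the product degeneration generated by $W$ (indeed, uniqueness of the optimal degeneration is left as a conjecture). So your identification step is not just hard but genuinely unsupported, and everything downstream of it collapses.

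More importantly, the paper avoids this identification entirely. The convergence theorem of Tian--Zhang--Zhang--Zhu \cite{TZZZ} has as its \emph{only} extra hypothesis that $\mu(\omega(t))\to\sup_{\omega\in c_1(X)}\mu(\omega)$ along the flow; once that is verified, it already gives convergence to $\omega_{KRS}$ modulo $\Aut(X)$, with no need to identify the Gromov--Hausdorff limit beforehand. Theorem~\ref{thm:perelman} proves exactly this $\mu$-convergence for an arbitrary initial metric, so the corollary follows in one line. Your detour through the H-functional, the Chen--Sun--Wang limit, and a hoped-for rigidity statement is unnecessary: the work is all contained in Theorem~\ref{thm:perelman}, which you never invoke.
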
 

\begin{proof} It is a result of Tian-Zhang-Zhang-Zhu \cite{TZZZ} that if $X$ admits a K\"ahler-Ricci soliton $\omega_{KRS}$ and $\omega(t)$ satisfies the K\"ahler-Ricci flow for arbitrary $\omega(0)$, then provided $$\mu(\omega(t))\to \sup_{\omega \in c_1(X)}\mu(\omega),$$ then the flow converges to $\omega_{KRS}$ modulo the action of automorphisms of $X$. Thus the claim follows from Theorem \ref{thm:perelman}.\end{proof}

\begin{remark}
  In some examples we expect that Theorem~\ref{thm:perelman} can be used to identify the limit of the K\"ahler-Ricci flow on $X$, even if $X$ does not admit a soliton, by finding degenerations that maximize the $\mu$-functional. In their work on the K\"ahler-Ricci flow on $S^2$ with conical singularities, Phong-Song-Sturm-Wang~\cite{PSSW1, PSSW2} use this approach to identify the limiting solitons along the flow. 
\end{remark}

\vspace{4mm}


\begin{thebibliography}{10}

\bibitem{Bam}
R.~H. Bamler.
\newblock Convergence of {R}icci flows with bounded scalar curvature.
\newblock {\em ArXiv e-prints}, March 2016.

\bibitem{SB}
Shigetoshi Bando.
\newblock The {$K$}-energy map, almost {E}instein {K}\"ahler metrics and an
  inequality of the {M}iyaoka-{Y}au type.
\newblock {\em Tohoku Math. J. (2)}, 39(2):231--235, 1987.

\bibitem{BWN}
R.~J. {Berman} and D.~{Witt Nystrom}.
\newblock {Complex optimal transport and the pluripotential theory of
  K\"ahler-Ricci solitons}.
\newblock {\em ArXiv e-prints}, January 2014.

\bibitem{RB}
Robert~J. Berman.
\newblock K-polystability of {${\Bbb Q}$}-{F}ano varieties admitting
  {K}\"ahler-{E}instein metrics.
\newblock {\em Invent. Math.}, 203(3):973--1025, 2016.

\bibitem{Ber09_1}
B.~{Berndtsson}.
\newblock {Probability measures related to geodesics in the space of K\"ahler
  metrics}.
\newblock {\em ArXiv e-prints}, July 2009.

\bibitem{Ber13}
B.~Berndtsson.
\newblock A {B}runn-{M}inkowski type inequality for {F}ano manifolds and some
  uniqueness theorems in {K}\"ahler geometry.
\newblock {\em Invent. Math.}, 200(1):149--200, 2015.

\bibitem{BHJ}
S.~Boucksom, T.~Hisamoto, and M.~Jonsson.
\newblock {Uniform K-stability, Duistermaat-Heckman measures and singularities
  of pairs}.
\newblock {\em Ann. Inst. Fourier}, to appear, 2017.

\bibitem{BT05}
L.~Bruasse and A.~Teleman.
\newblock Harder-{N}arasimhan filtrations and optimal destabilizing vectors in
  complex geometry.
\newblock {\em Ann. Inst. Fourier (Grenoble)}, 55(3):1017--1053, 2005.

\bibitem{CSW}
X.~{Chen}, S.~{Sun}, and B.~{Wang}.
\newblock {K\"ahler-Ricci flow, K\"ahler-Einstein metric, and K-stability}.
\newblock {\em ArXiv e-prints}, August 2015.

\bibitem{CW}
X.~{Chen} and B.~{Wang}.
\newblock {Space of Ricci flows (II)}.
\newblock {\em ArXiv e-prints}, May 2014.

\bibitem{CDS}
Xiuxiong Chen, Simon Donaldson, and Song Sun.
\newblock K\"ahler-{E}instein metrics on {F}ano manifolds. {I}, {II}, {III}.
\newblock {\em J. Amer. Math. Soc.}, 28(1):183--197, 199--234, 235--278, 2015.

\bibitem{CS}
T.~C. {Collins} and G.~{Sz{\'e}kelyhidi}.
\newblock {K-Semistability for irregular Sasakian manifolds}
\newblock {\em J. Differential Geom.}, to appear, 2016.


\bibitem{CS2}
T.~C. {Collins} and G.~{Sz{\'e}kelyhidi}.
\newblock {The twisted {K}\"ahler-{R}icci flow}
\newblock {\em Journal f\"ur die Reine und Angewandte Mathematik. [Crelle's
              Journal]}, 716:179--205, 2016.

\bibitem{JPD}
Jean-Pierre Demailly.
\newblock Mesures de {M}onge-{A}mp\`ere et caract\'erisation g\'eom\'etrique
  des vari\'et\'es alg\'ebriques affines.
\newblock {\em M\'em. Soc. Math. France (N.S.)}, (19):124, 1985.

\bibitem{RD}
R.~{Dervan}.
\newblock {Relative K-stability for K\"ahler manifolds}.
\newblock {\em ArXiv e-prints}, November 2016.

\bibitem{DT92}
W.~Ding and G.~Tian.
\newblock {K}\"ahler-{E}instein metrics and the generalized futaki invariant.
\newblock {\em Invent. Math.}, 110:315--335, 1992.

\bibitem{DT92b}
W.~Ding and G.~Tian.
\newblock {The generalized Moser-Trudinger inequality}.
\newblock {\em Nonlinear Analysis and Microlocal Analysis: Proceedings of the International Conference at Nankai Institute of Mathematics}. 
\newblock {World Scientific}, 57--70, 1992.

\bibitem{SD2}
S.~{Donaldson}.
\newblock {The Ding functional, Berndtsson convexity and moment maps}.
\newblock {\em ArXiv e-prints}, March 2015.

\bibitem{don-toric}
S.~K. Donaldson.
\newblock Scalar curvature and stability of toric varieties.
\newblock {\em J. Differential Geom.}, 62(2):289--349, 2002.

\bibitem{SD}
S.~K. Donaldson.
\newblock Lower bounds on the {C}alabi functional.
\newblock {\em J. Differential Geom.}, 70(3):453--472, 2005.

\bibitem{WH}
W.~{He}.
\newblock {K\"ahler-{R}icci soliton and H-functional}.
\newblock {\em Asian J. of Math}, to appear, 2016.

\bibitem{WH2}
Weiyong He.
\newblock On the convergence of the {C}alabi flow.
\newblock {\em Proc. Amer. Math. Soc.}, 143(3):1273--1281, 2015.

\bibitem{TH}
Tomoyuki Hisamoto.
\newblock On the limit of spectral measures associated to a test configuration
  of a polarized {K}\"ahler manifold.
\newblock {\em J. Reine Angew. Math.}, 713:129--148, 2016.

\bibitem{GK}
George~R. Kempf.
\newblock Instability in invariant theory.
\newblock {\em Ann. of Math. (2)}, 108(2):299--316, 1978.

\bibitem{KL}
B.~Kleiner and J.~Lott.
\newblock {Notes on Perelman's papers}.
\newblock {\em Geom. Topol.} 12(5):2587--2855, 2008.

\bibitem{Pali08}
N.~Pali.
\newblock {Characterization of Einstein-Fano manifolds via the K\"ahler-Ricci flow}.
\newblock {\em Indiana Univ. Math. J.}, 57(7):3241--3274, 2008.

\bibitem{Per02}
G.~{Perelman}.
\newblock {The entropy formula for the Ricci flow and its geometric
  applications}.
\newblock {\em ArXiv Mathematics e-prints}, November 2002.

\bibitem{PSSW1}
D.~H. Phong and J. Song and J. Sturm and X. Wang.
\newblock The Ricci flow on the sphere with marked points.
\newblock {\em arXiv:1407.1118}

\bibitem{PSSW2}
D.~H. Phong and J. Song and J. Sturm and X. Wang.
\newblock Convergence of the conical Ricci flow on $S^2$ to a soliton.
\newblock {\em arXiv:1503.04488}

\bibitem{PSSW3}
D.~H. Phong and J. Song and J. Sturm and B. Weinkove.
\newblock The K\"ahler-Ricci flow and the $\bar{\partial}$ operator on vector fields.
\newblock {\em J. Differential Geometry}, 81:631--647, 2009.

\bibitem{PS}
D.~H. Phong and J. Sturm.
\newblock Regularity of geodesic rays and {M}onge-{A}mp\`ere equations.
\newblock {\em Proc. Amer. Math. Soc.}, 138(10):3637--3650, 2010.

\bibitem{RT}
J.~Ross and R.~P. Thomas.
\newblock A study of the {H}ilbert-{M}umford criterion for the stability of
  projective varieties.
\newblock {\em J. Algebraic Geom.}, 16(2):201--255, 2007.

\bibitem{Rot}
O.~S.~Rothaus.
\newblock Logarithmic {S}obolev inequalities and the spectrum of {S}chr\"odinger operators. 
\newblock {\em J. Funct. Anal.}, 42:110--120, 1981.

\bibitem{ST03}
N.~Sesum and G.~Tian.
\newblock Bounding scalar curvature and diameter along the {K}\"ahler-{R}icci
  flow (after {P}erelman).
\newblock {\em J. Inst. Math. Jussieu}, 7(3):575--587, 2008.

\bibitem{GT}
Gang Tian.
\newblock K\"ahler-{E}instein metrics with positive scalar curvature.
\newblock {\em Invent. Math.}, 130(1):1--37, 1997.

\bibitem{TZZZ}
Gang Tian, Shijin Zhang, Zhenlei Zhang, and Xiaohua Zhu.
\newblock Perelman's entropy and {K}\"ahler-{R}icci flow on a {F}ano manifold.
\newblock {\em Trans. Amer. Math. Soc.}, 365(12):6669--6695, 2013.

\bibitem{TZ1}
Gang Tian and Xiaohua Zhu.
\newblock Convergence of {K}\"ahler-{R}icci flow.
\newblock {\em J. Amer. Math. Soc.}, 20(3):675--699, 2007.

\bibitem{TZ2}
Gang Tian and Xiaohua Zhu.
\newblock Convergence of the {K}\"ahler-{R}icci flow on {F}ano manifolds.
\newblock {\em J. Reine Angew. Math.}, 678:223--245, 2013.

\bibitem{DWN}
David Witt~Nystr{\"o}m.
\newblock Test configurations and {O}kounkov bodies.
\newblock {\em Compos. Math.}, 148(6):1736--1756, 2012.

\bibitem{Yau93}
S.-T. Yau.
\newblock Open problems in geometry.
\newblock {\em Proc. Symposia Pure Math.}, 54:1--28, 1993.

\bibitem{Ye}
R.~Ye.
\newblock The logarithmic {S}obolev inequality along the {R}icci flow.
\newblock {\em arXiv:0707.2424}.

\bibitem{Zha}
Q.~S.~Zhang.
\newblock A uniform {S}obolev inequality under {R}icci flow.
\newblock {\rm IMRN}, 1--12, 2007. 

\end{thebibliography}
\end{document}